\documentclass[11pt,a4paper]{article}

\usepackage[english]{babel}
\usepackage[utf8]{inputenc}
\usepackage{lmodern}
\usepackage[T1]{fontenc}
\usepackage{datetime}
\usepackage{xifthen}

\usepackage{amsmath,amssymb}
\usepackage{mathtools}
\usepackage{geometry}
\usepackage{hyperref}
\usepackage{microtype}
\usepackage{verbatim}
\usepackage{rotating}
\usepackage{array}
\usepackage[ruled,linesnumbered]{algorithm2e}
\usepackage{tikz}
\usepackage{pgfplots}

\usepackage{xspace}
\newcommand{\ie}{i.e.\@\xspace}
\newcommand{\eg}{e.g.\@\xspace}
\newcommand{\etal}{et al.\@\xspace}

\usepackage{amsthm}
\newtheorem{theorem}{Theorem}
\newtheorem{definition}[theorem]{Definition}
\newtheorem{proposition}[theorem]{Proposition}

\newtheorem{corollary}[theorem]{Corollary}
\newtheorem{lemma}[theorem]{Lemma}
\newtheorem{conjecture}[theorem]{Conjecture}

\newcommand{\GF}[2][2]{\mathbb{F}_{#1^{#2}}}
\newcommand{\T}{\mathcal{T}}
\newcommand{\set}[1]{\left\{ #1 \right\}}

\DeclareMathOperator{\Tr}{Tr}
\makeatletter
\newcommand{\tr}[3][1]{\ifthenelse{\isempty{#3}}%
  {\Tr_{#1}^{#2}}%
  {\Tr_{#1}^{#2}\left(#3\right)}}
\newcommand{\addch}[1]{\ifthenelse{\isempty{#1}}%
  {\chi}%
  {\chi \left( #1 \right)}}
\newcommand{\mulch}[2][m_1]{\ifthenelse{\isempty{#2}}%
  {\psi_{#1}}%
  {\psi_{#1} \left( #2 \right)}}
\makeatother
\newcommand{\Wa}[1]{\widehat{\chi_{#1}}}
\newcommand{\Snu}[1][\nu]{S_{#1}(a, b, \omega)}
\newcommand{\mystery}{h_{a, b}(\omega)}

\allowdisplaybreaks

\hypersetup{pdftitle={A conjecture about Gauss sums and bentness of binomial Boolean functions},%
  pdfauthor={Jean-Pierre Flori},%
  pdfsubject={Mathematics and Cryptography},%
  pdfcreator={Jean-Pierre Flori},%
  pdfproducer={Jean-Pierre Flori},%
  pdfkeywords={mathematics} {cryptography} {Boolean functions} {bent functions} {Walsh spectrum} {Kloosterman sums},
}

\title{A conjecture about Gauss sums and bentness of binomial Boolean functions}

\author{Jean-Pierre Flori
  \thanks{ANSSI (Agence nationale de la sécurité des systèmes d'information),
    51, boulevard de La Tour-Maubourg,
    75700 Paris 07 SP, France.
    \texttt{jean-pierre.flori@ssi.gouv.fr}}
}

\date{}

\begin{document}

\maketitle

\begin{abstract}
  In this note, the polar decomposition of binary fields of even extension degree is
  used to reduce the evaluation of the Walsh transform of binomial Boolean functions
  to that of Gauss sums.
  In the case of extensions of degree four times an odd number, an explicit formula involving a Kloosterman sum is conjectured,
  proved with further restrictions, and supported by extensive experimental
  data in the general case.
  In particular, the validity of this formula is shown to be equivalent
  to a simple and efficient characterization for bentness
  previously conjectured by Mesnager.
\end{abstract}

\noindent
{\bf Keywords}. Boolean functions, bent functions, Walsh spectrum, exponential sums, Gauss sums, Kloosterman sums.

\section{Introduction}
\label{sec:introduction}

Bent functions are Boolean functions defined over an extension of even
degree and achieving optimal non-linearity.
They are of both combinatorial and cryptographic interest.
Unfortunately, characterizing bentness of an arbitrary Boolean function
is a difficult problem,
and even the less general question of providing simple and efficient
criteria within infinite families of functions in a specific polynomial form
is still challenging.

For a Boolean function $f$ defined over $\GF{n}$ with $n = 2 m$ and
given in polynomial form, a classical characterization for bentness
is that its Walsh transform $\Wa{f}$ values are only $2^{\pm m}$.
Nevertheless, such a characterization is neither concise nor efficient:
the best algorithm to compute the full Walsh spectrum has complexity $O(n 2^n)$,
which is asymptotically optimal.
Whence the need to restrict to functions
in a given form and to look for more efficient criteria.
Unfortunately, only a few infinite families of Boolean functions
with a simple and efficient criterion for bentness are known.

The most classical family is due to Dillon~\cite{MR2624542}
and is made of monomial functions:
\[
f_a(x) = \tr{n}{a x^{r(2^m-1)}} \enspace ,
\]
where $n = 2 m$, $a \in \GF{n}^*$ and $r$ is co-prime with $2^m + 1$.
Such functions are bent (and even hyper-bent: for any $r$ coprime
with $(2^n - 1)$ the function $f_a(x^r)$ is also bent)
if and only if the Kloosterman sum $K_m(a)$
associated with $a$ is equal to zero~\cite{MR2624542,DBLP:journals/tit/Leander06,DBLP:journals/tit/CharpinG08}.
Not only does such a criterion gives a concise and elegant characterization for bentness,
but using the connection between Kloosterman sums and elliptic curves~\cite{MR925289,MR1054286}
it also allows to check for bentness in polynomial time~\cite{DBLP:conf/seta/Lisonek08,DBLP:journals/corr/abs-1104-3882}.
Further results on Kloosterman sums involving $p$-adic arithmetic~\cite{MR2794931,6126036,Moloney:PHD}
lead to even faster generation of zeros of Kloosterman sums and so of (hyper-)bent functions.

Mesnager~\cite{DBLP:journals/dcc/Mesnager11,DBLP:journals/tit/Mesnager11} proved a similar criterion
for a family Boolean functions in binomial form:
\[
f_{a,b}(x) = \tr{n}{a x^{r(2^m-1)}} + \tr{2}{b x^{\frac{2^n-1}{3}}} \enspace ,
\]
where $n = 2 m$, $a \in \GF{n}^*$, $b \in \GF[4]{}^*$
and $r$ is co-prime with $2^m + 1$
(but also $r = 3$ which divides $2^m+1$~\cite{DBLP:conf/ima/Mesnager09}).
When the extension degree $n$ is twice an odd number, that is when $m$ is odd,
$f_{a,b}$ is (hyper-)bent if and only if $K_m(a) = 4$.
Moreover, (hyper-)bent functions in this family can be quickly generated
as techniques used to generate zeros of Kloosterman sums can be transposed
to the value $4$~\cite{DBLP:conf/ima/FloriMC11}.

Unfortunately, the proof of the aforementioned characterization
does not extend to the case where $m$ is even.
Nevertheless, it is easy to show that $K_m(a) = 4$ is still a necessary
condition for $f_{a,b}$ to be bent in this latter case
(but note that $f_{a,b}$ can no longer be hyper-bent).
Further experimental evidence gathered by Flori, Mesnager
and Cohen~\cite{DBLP:conf/ima/FloriMC11} supported the conjecture
that it should also be a sufficient condition:
for $m$ up to $16$, $f_{a,b}$ is bent if and only if $K_m(a) = 4$.

In this note, the polar decomposition of fields of even extension degree
$n = 2^\nu m$ with $m$ odd is used to reduce the evaluation of the Walsh transform
of $f_{a,b}$ at $\omega \in \GF{n}^*$ to that of a Gauss sum of the form
\begin{align}
\label{eq:gengauss}
\sum_{u \in U} \mulch[n]{b \tr[m]{n}{\omega u}} \addch{\tr{n}{a u^{2^{2^{\nu-1}m}-1}}} \enspace ,
\end{align}
where $\GF{n}^*$ is decomposed as $\GF{n}^* \simeq U \times \GF{m}^*$,
$\mulch[n]{}$ is a cubic multiplicative character
and $\addch{}$ a quadratic additive character.

In the case of extensions of degree four times an odd number,
that is when $n$ is four times an odd number $m$,
an explicit formula involving the Kloosterman sum $K_{n/2}(a)$ is proved
for $\omega$ lying in the subfield $\GF{n/2}$,
and conjectured and supported by extensive experimental evidence
when $\omega \in \GF{n}$.
In particular, the validity of this formula would prove the following
conjecture for extensions of degree four times an odd number
(and give hope to prove the conjecture for $n$ of any $2$-adic valuation):
\begin{conjecture}
\label{cnj:kloofour}
Let $n = 4m$ with $m$ odd, $a \in \GF{n/2}^*$ and $b \in \GF[4]{}^*$.
The function $f_{a,b}$ is bent if and only if $K_{n/2}(a) = 4$.
\end{conjecture}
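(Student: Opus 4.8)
The plan is to follow the reduction recalled in the introduction and to push it as far as the subfield structure of $\GF{n}$ permits. Write $n = 4m$ with $m$ odd, so that $n/2 = 2m$ is even, and fix $a \in \GF{n/2}^{*}$ and $b \in \GF[4]{}^{*}$. Through the polar decomposition $\GF{n}^{*} \simeq U \times \GF{m}^{*}$, with $U$ the cyclic subgroup of order $(2^{n}-1)/(2^{m}-1)$, the evaluation of the Walsh value $\Wa{f_{a,b}}(\omega)$ is reduced to that of the Gauss sum \eqref{eq:gengauss} taken with $\nu = 2$, which we denote $\Snu[2]$:
\[
\Snu[2] \;=\; \sum_{u \in U} \mulch[n]{b \, \tr[m]{n}{\omega u}} \, \addch{\tr{n}{a \, u^{2^{n/2}-1}}} \enspace .
\]
Since $f_{a,b}$ is bent precisely when $\Wa{f_{a,b}}(\omega) \in \set{\pm 2^{n/2}}$ for every $\omega \in \GF{n}$, everything comes down to evaluating $\Snu[2]$ in closed form as a function of $K_{n/2}(a)$, and I would organize the argument according to whether or not $\omega$ lies in the subfield $\GF{n/2}$.

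\textbf{The subfield case.} For $\omega \in \GF{n/2}$ the evaluation can be carried through, and this is exactly the part the excerpt already establishes. The twist $u \mapsto u^{2^{n/2}-1}$ carries $U$ onto the norm-one subgroup of $\GF{n}/\GF{n/2}$, while for $\omega \in \GF{n/2}$ the inner trace $\tr[m]{n}{\omega u}$ depends on $u$ only through $\tr[n/2]{n}{u} \in \GF{n/2}$; combining these two facts with $a \in \GF{n/2}^{*}$ turns the sum over $U$ into a sum over $\GF{n/2}$ that is an explicit expression in $K_{n/2}(a)$. Reading off that expression shows that, for every $\omega \in \GF{n/2}$, one has $\Wa{f_{a,b}}(\omega) \in \set{\pm 2^{n/2}}$ if and only if $K_{n/2}(a) = 4$, so this case is unconditional.

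\textbf{The generic case, and the main obstacle.} The real difficulty is to prove the conjectured closed form $\mystery$ for $\Snu[2]$ when $\omega \notin \GF{n/2}$. There the inner trace no longer factors through the degree-two subextension, the reduction of the previous paragraph collapses, and one is left with a genuinely cubic Gauss sum over the coset structure of $U$. I would attempt two complementary routes. The first is a Gauss-sum expansion: resolve the power map $u \mapsto u^{2^{n/2}-1}$ through multiplicative characters, rewrite the quadratic additive character $\addch{}$ over the resulting fibres, express the outcome through Gauss sums over $\GF{n/2}$ by Davenport--Hasse relations for the cubic character $\mulch[n]{}$ and its companions, and then pin down those Gauss sums --- their modulus, their argument, and their valuation --- via Stickelberger's theorem and the Gross--Koblitz formula. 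The second is a descent argument reducing a generic $\omega$ to a subfield-type sum by a change of variables on $U$ adapted to the cubic character, using that $2^{n/2}+1$ divides $\card{U}$. I expect this to be the genuine obstruction, and it is precisely where Conjecture~\ref{cnj:kloofour} now stands: the cubic Gauss sum involved is not a pure Gauss sum, so only its absolute value is controlled by the classical machinery, and its exact value --- equivalently, the validity of $\mystery$ for all $\omega$ --- is at present only supported by experiment.

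\textbf{Conclusion.} Granting $\mystery$ for every $\omega$, the equivalence follows by specialization: when $K_{n/2}(a) = 4$ the formula collapses, uniformly in $\omega$, to $\pm 2^{n/2}$, so $f_{a,b}$ is bent; conversely, if $K_{n/2}(a) \neq 4$ then already the subfield case produces some $\omega \in \GF{n/2}$ with $\Wa{f_{a,b}}(\omega) \notin \set{\pm 2^{n/2}}$, so $f_{a,b}$ is not bent --- which also recovers the necessity recalled in the introduction (consistent with $n/2 = 2m$ being even). Hence the entire weight of the conjecture rests on the ``if'' direction, and therefore on establishing $\mystery$ for $\omega \notin \GF{n/2}$.
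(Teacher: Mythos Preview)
Your write-up is not a proof, and you do not pretend otherwise --- which is appropriate, because the statement is labelled a \emph{conjecture} in the paper and is not proved there either. The paper establishes exactly what you describe: the necessity $f_{a,b}$ bent $\Rightarrow K_{n/2}(a)=4$ unconditionally, and the sufficiency only for $\omega$ in the subfield $\GF{n/2}$ (Corollary~\ref{crl:walshsubfield}); the case $\omega\notin\GF{n/2}$ is left as Conjecture~\ref{cnj:walshconj}, supported by numerics. So your overall reading of the logical structure is correct and matches the paper.

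Two points of precision are worth correcting. First, the sum you write down and call $\Snu[2]$ is the Gauss-type sum from the introduction (Equation~\eqref{eq:gengauss}); in the paper, $\Snu$ is the more refined subsum defined in Proposition~\ref{prp:snu} (Equation~\eqref{eq:gauss}), obtained only after peeling off the layers $\sigma_1,\dots,\sigma_{\nu-1}$ and the ``easy'' half of $\sigma_\nu$. Relatedly, $\mystery$ is not the name of a formula: it is the Boolean function $h_{a,b}(\omega)$ whose mere existence is the content of Conjecture~\ref{cnj:walshconj}; what you are ``granting'' is that conjecture. Second, your necessity argument in the subfield paragraph overstates what is available: Corollary~\ref{crl:walshsubfield} assumes $K_{n/2}(a)\equiv 1\pmod 3$, so it does not directly cover all $a$ with $K_{n/2}(a)\neq 4$. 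In the paper, necessity comes from the single value $\omega=0$ together with the Hasse--Weil bound (the Proposition following the computation of $\Wa{f_{a,b}}(0)$), not from the full subfield analysis.
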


\section{Notation}
\label{sec:notation}

\subsection{Field trace}

\begin{definition}[Field trace]
For extension degrees $m$ and $n$ such that $m$ divides $n$,
the field trace from $\GF{n}$ down to $\GF{m}$ is denoted by $\tr[m]{n}{x}$.
\end{definition}

\subsection{Polar decomposition}

\begin{definition}[Extension degrees]
Let $n \geq 2$ be an even integer and
$\nu \geq 1$ denote its $2$-adic valuation.
We denote by $m_i$ for $0 \leq i \leq \nu$ the integer $n / 2^i$,
\eg $m_0 = n$ and $m_\nu = m$ in the introduction.
\end{definition}

For $0 \leq i < \nu$, the multiplicative group  $\GF{m_i}^*$
can be split using the so-called polar decomposition
\begin{align*}
\GF{m_i}^* & \simeq U_{i+1} \times \GF{m_{i+1}}^* \enspace ,
\end{align*}
where $U_{i+1} \subset \GF{m_i}^*$ is the subgroup of $(2^{m_{i+1}}+1)$-th roots of unity
and $\GF{m_{i+1}}^*$ the subgroup of $(2^{m_{i+1}}-1)$-th roots of unity.
Repeating this construction yields the following decomposition.

\begin{lemma}[Polar decomposition]
Let $\nu \geq 1$ and denote by $U$ denote the image of
$U_1 \times \cdots \times U_\nu$ within $\GF{m_0}^*$.
Then $\GF{m_0}^*$ decomposes as
\begin{align*}
\GF{m_0}^*
& \simeq U_1 \times \cdots \times U_\nu \times \GF{m_\nu}^*
\\
& \simeq U \times \GF{m_\nu}^*
\enspace .
\end{align*}
\end{lemma}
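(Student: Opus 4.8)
The plan is to establish the two displayed isomorphisms in turn: the first by iterating the one-step polar decomposition $\GF{m_i}^* \simeq U_{i+1} \times \GF{m_{i+1}}^*$ exactly $\nu$ times, and the second by observing that all the resulting factors sit inside the cyclic group $\GF{m_0}^*$ and have pairwise coprime orders.

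First I would pin down the one-step decomposition itself. Since $m_i = 2 m_{i+1}$, we have
\[
2^{m_i} - 1 = \bigl(2^{m_{i+1}} - 1\bigr)\bigl(2^{m_{i+1}} + 1\bigr),
\]
and the two factors are coprime, since their $\gcd$ divides $2$ while both are odd. As $\GF{m_i}^*$ is cyclic of order $2^{m_i}-1$, it contains a unique subgroup of order $2^{m_{i+1}}+1$, namely $U_{i+1}$, and a unique subgroup of order $2^{m_{i+1}}-1$, namely the $(2^{m_{i+1}}-1)$-th roots of unity, which are exactly the nonzero elements of the subfield $\GF{m_{i+1}} \subseteq \GF{m_i}$. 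Coprimality of the orders forces $U_{i+1} \cap \GF{m_{i+1}}^* = \set{1}$, so the internal product $U_{i+1}\cdot\GF{m_{i+1}}^*$ is direct, and counting orders shows it exhausts $\GF{m_i}^*$; this is the isomorphism quoted just before the lemma.

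Iterating, I would apply this to $\GF{m_0}^*$, then to the subfield factor $\GF{m_1}^*$ it produces, and so on down to $\GF{m_{\nu-1}}^*$; after $\nu$ steps this yields
\[
\GF{m_0}^* \simeq U_1 \times U_2 \times \cdots \times U_\nu \times \GF{m_\nu}^*,
\]
which is the first claimed isomorphism, with each $U_j$ realized concretely as the subgroup of $(2^{m_j}+1)$-th roots of unity inside $\GF{m_{j-1}}^* \subseteq \GF{m_0}^*$. For the second isomorphism I would check that the orders $2^{m_1}+1,\dots,2^{m_\nu}+1$ and $2^{m_\nu}-1$ are pairwise coprime: for $i<j$, writing $t = 2^{m_j}$ one has $m_i = 2^{j-i} m_j$ with $2^{j-i}$ even, so $2^{m_i}+1 = t^{2^{j-i}}+1 \equiv 2 \pmod{t+1}$ and likewise $\equiv 2 \pmod{t-1}$, while both $t\pm1$ are odd. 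By the telescoping factorization these orders also multiply to $2^{m_0}-1$. Hence the internal product $U := U_1\cdots U_\nu$ inside the cyclic group $\GF{m_0}^*$ is a direct product of the $U_j$, has order $\prod_{j=1}^{\nu}\bigl(2^{m_j}+1\bigr) = (2^{m_0}-1)/(2^{m_\nu}-1)$, and meets $\GF{m_\nu}^*$ only in $\set{1}$; comparing orders gives $\GF{m_0}^* = U \times \GF{m_\nu}^*$.

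The whole argument is bookkeeping around the structure theorem for finite cyclic groups; the only point requiring genuine (if very mild) work is the pairwise coprimality of the orders of the $U_j$'s and of $2^{m_\nu}-1$, since this is precisely what makes the iterated product of subgroups direct and forces it to fill $\GF{m_0}^*$. I expect that elementary congruence check to be the main — and essentially only — obstacle.
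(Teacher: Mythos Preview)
Your proposal is correct and follows exactly the route the paper indicates: the paper gives no formal proof but merely states ``Repeating this construction yields the following decomposition'' after recording the one-step split $\GF{m_i}^* \simeq U_{i+1}\times\GF{m_{i+1}}^*$, and your argument is precisely that iteration, with the coprimality bookkeeping spelled out. There is nothing to add.
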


\subsection{Hilbert's Theorem 90}
\label{sec:ht90}

\begin{definition}
For $1 \leq i \leq \nu$ and $j \in \GF{}$, let $\T_{m_i}^j$ be the set
\begin{align*}
\T_{m_i}^j & = \set{x \in \GF{m_i}, \tr{m_i}{x^{-1}} = j}
\end{align*}
of elements of $\GF{m_i}$ whose inverses have trace $j$
(defining $0^{-1}$ to be $0$).
\end{definition}

Hilbert's Theorem 90~\cite{DBLP:journals/ffa/DillonD04}
implies that the function $x \mapsto x + x^{-1}$ is
$2$-to-$1$ from $U_i \backslash \set{1}$ to $\T_{m_i}^1$
and from $\GF{m_i}^* \backslash \set{1}$ to $\T_{m_i}^0 \backslash \set{0}$
(and both $0$ and $1$ are sent onto $0$).

\subsection{Dickson polynomials}
\label{sec:dickson}

\begin{definition}
We denote by $D_3$ the third Dickson polynomial of the first kind
$D_3(x) = x^3 + x$.
\end{definition}

A notable property of $D_3$ is that $D_3(x + x^{-1}) = x^3 + x^{-3}$.
It implies in particular that $D_3$ induces a permutation of $\T_{m_1}^0$
when $m_1$ is odd and of $\T_{m_1}^1$ when $m_1$ is
even~\cite[Propositions~5, 6 and Theorem~7]{DBLP:journals/ffa/DillonD04}.

\subsection{Characters}
\label{sec:characters}

\begin{definition}[Additive character]
Denote by $\addch{}$ the non-principal quadratic additive character of $\GF{}$.
\end{definition}

Together with the field trace,
$\addch{}$
can be used to construct all quadratic additive
characters of $\GF{m_i}$ for any $0 \leq i \leq \nu$.

\begin{definition}[Multiplicative character]
The non-principal cubic multiplicative character $\mulch[m_i]{}$ of $\GF{m_i}$
for any $0 \leq i < \nu$ is defined for $x \in \GF{m_i}$ as
\[
\mulch[m_i]{x} = x^{\frac{2^{m_i}-1}{3}} \enspace .
\]
\end{definition}

Note that if $x$ lies in a subextension,
that is $x \in \GF{m_{i+j}}$ with $0 \leq i+j < \nu$, then
\[
\mulch[m_i]{x} = \mulch[m_{i+j}]{x}^{2^j} \enspace .
\]

Remark that $3$ divides $2^{m_\nu}+1$ and is coprime with $2^{m_\nu}-1$ and $2^{m_i}+1$ for $0 \leq i < \nu$.
Therefore the function $x \mapsto x^3$ is a permutation of $\GF{m_\nu}^*$ and
$U_i$ for $1 \leq i < \nu$, and $3$-to-$1$ on $U_\nu$.
In particular, the multiplicative character $\mulch[m_0]{}$
is trivial everywhere on $\GF{m_0}^*$ but on $U_\nu$.

\subsection{Walsh transform}

\begin{definition}
The Walsh transform of a Boolean function $f$ at $\omega \in \GF{m_0}$ is
\begin{align*}
\Wa{f}(\omega) & = \sum_{x \in \GF{m_0}} \addch{f(x) + \tr{m_0}{\omega x}} \enspace .
\end{align*}
\end{definition}

It is well-known that a Boolean function $f$ is bent if and only if
its Walsh transform only takes the values $2^{\pm m_1}$.

\subsection{Kloosterman sums}
\label{sec:kloo}

\begin{definition}
For $a \in \GF{m_1}$, the Kloosterman sum $K_{m_1}(a)$ is
\begin{align*}
K_{m_1}(a) & = \sum_{x \in \GF{m_1}} \addch{\tr{m_1}{a x + x^{-1}}} \enspace .
\end{align*}
\end{definition}

The following identities (proved using the map from Section~\ref{sec:ht90})
are well-known:
\begin{align*}
\sum_{u_1 \in U_1} \addch{\tr{m_0}{a u_1}}
& = 1 + 2 \sum_{t \in \T_{m_1}^1} \addch{\tr{m_1}{a t}} \\
& = 1 - 2 \sum_{t \in \T_{m_1}^0} \addch{\tr{m_1}{a t}} \\
& = 1 - K_{m_1}(a) \enspace .
\end{align*}

\subsection{Cubic sums}
\label{sec:cubic}

\begin{definition}
For $a, b \in \GF{m_1}$, the cubic sum $C_{m_1}(a, b)$ is
\begin{align*}
C_{m_1}(a, b) & = \sum_{x \in \GF{m_1}} \addch{\tr{m_1}{a x^3 + b x}} \enspace .
\end{align*}
\end{definition}

The possible values of $C_{m_1}(a, b)$ were determined by Carlitz~\cite{MR544577}
together with simple criteria involving $a$ and $b$.

The most important consequence of Carlitz's results in our context
is that $C_{m_1}(a, a) = \sum_{x \in \GF{m_1}} \addch{\tr{m_1}{a D_3(x)}} = 0$ if and only if
\begin{itemize}
\item $\tr{m_1}{\alpha} = 0$ for $\alpha \in \GF{m_1}^*$ such that $a = \alpha^3$
when $m_1$ is odd (in that case $a$ is always a cube),
\item and when there exists $\alpha \in \GF{m_1}^*$ such that $a = \alpha^3$
(that is $a$ is a cube or equivalently $\mulch{a} = 1$)
and $\tr[2]{m_1}{\alpha} \neq 0$ (that is the cube root's half-trace is non zero)
when $m_1$ is even.
\end{itemize}
Charpin \etal later deduced that both in the odd case~\cite{DBLP:journals/jct/CharpinHZ07}
and in the even case~\cite{4595463,DBLP:journals/dm/CharpinHZ09}
these conditions are equivalent to $K_{m_1}(a) \equiv 1 \pmod{3}$.

For completeness, the other possible values for $C_{m_1}(a, a)$ when $m_1$ is even follow:
\begin{itemize}
\item When $a$ is a cube and $\tr[2]{m_1}{\alpha} = 0$,
then $C_{m_1}(a, a) = 2^{m_2 + 1} \addch{\tr{m_1}{u_0^3}}$,
where $u_0$ is any solution to $u^4 + u = \alpha^4$,
that is $u_0 = \sum_{i=0}^{(m_2-3)/2} \alpha^{4^{2*i+2}} + \gamma$
for any $\gamma \in \GF[4]{}$.
\item When $a$ is not a cube,
then $C_{m_1}(a, a) = - 2^{m_2} \addch{\tr{m_1}{a u_0^3}}$,
where $u_0$ is the unique solution to $u^4 + u / a = 1$,
that is $u_0 = \mulch{a} \sum_{i = 0}^{m_2-1} a^{4^i} a^{(4^i - 1)/3}$.
\end{itemize}

Finally, Carlitz also proved the following result on $C_{m_1}(a, 0)$ when $m_1 = 2 m_2$ is even:
\begin{align*}
C_{m_1}(a, 0)
& = \left\{
\begin{array}{ll}
(-1)^{m_2+1} 2^{m_2+1} & \text{if $\mulch{a} = 1$,} \\
(-1)^{m_2} 2^{m_2} & \text{if $\mulch{a} \neq 1$.} \\
\end{array}
\right.
\end{align*}

\subsection{Binomial functions}

The binomial Boolean functions $f_{a,b}$ studied in this note are defined over $\GF{m_0}$.
\begin{definition}
For $\nu \geq 1$, $a \in \GF{m_0}^*$ and $b \in \GF[4]{}^*$,
we denote by $f_{a,b}$ the binomial function
\begin{align}
f_{a,b}(x)
& = \tr{m_0}{a x^{2^{m_1}-1}} + \tr{2}{b \mulch[m_0]{x}} \enspace .
\end{align}
We also define $f_a = f_{a,0}$ (corresponding to Dillon's monomial) and
$g_b(x) = \tr{2}{b \mulch[m_0]{x}}$.
\end{definition}

\section{Preliminaries}

\subsection{Field of definition of the coefficients}

First notice that it is enough to know how to evaluate the Walsh transform of
functions $f_{a,b}$ for $a \in \GF{m_1}^*$.
\begin{lemma}
Let $a \in \GF{m_0}^*$ be written as $a = \alpha \tilde{a}$
with $\alpha \in U_1$ and $\tilde{a} \in \GF{m_1}^*$
using the polar decomposition of $\GF{m_0}^*$.
Let $\tilde{\alpha} \in U_1$ be a square root of $\alpha$
and $\beta \in \GF[4]{}^*$ be $\beta = \mulch[m_0]{\alpha}^{-1}$.
Then
\begin{align*}
\Wa{f_{a,b}}(\omega) & = \Wa{f_{\tilde{a},\beta b}}(\tilde{\alpha} \omega) \enspace .
\end{align*}
\end{lemma}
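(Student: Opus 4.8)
The plan is to unwind the definition of the Walsh transform for $f_{a,b}$ and then absorb the unit $\alpha \in U_1$ via a multiplicative change of variable, adjusting the point $\omega$ and the coefficient $b$ accordingly. First I would write, directly from the definition,
\[
\Wa{f_{a,b}}(\omega) = \sum_{x \in \GF{m_0}} \addch{\tr{m_0}{a x^{2^{m_1}-1}} + \tr{2}{b \mulch[m_0]{x}} + \tr{m_0}{\omega x}} \enspace .
\]
The $x = 0$ term contributes $1$ regardless, so it suffices to handle the sum over $x \in \GF{m_0}^*$. On that group I would substitute $x = \tilde{\alpha}^{-1} y$ where $\tilde\alpha \in U_1$ is the chosen square root of $\alpha$; this is a bijection of $\GF{m_0}^*$, so the sum is unchanged as a set-sum.

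Next I would track what this substitution does to each of the three trace terms. For the linear term $\tr{m_0}{\omega x}$ it immediately becomes $\tr{m_0}{(\tilde\alpha^{-1}\omega) y}$, which is the linear term attached to the point $\tilde\alpha^{-1}\omega$ — and since $\tilde\alpha \in U_1$ has $\tilde\alpha^{-1} = \tilde\alpha^{2^{m_1}}$ or one can simply note the statement only needs the point rescaled by a unit, I would confirm $\tilde\alpha^{-1}\omega$ matches $\tilde\alpha\omega$ after identifying $\tilde\alpha$ with its inverse in $U_1$ appropriately (equivalently, replace $\tilde\alpha$ by $\tilde\alpha^{-1}$ throughout; the key point is it is \emph{a} square root of $\alpha^{\pm1}$, and the paper's normalization picks one). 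For the Dillon term, $x^{2^{m_1}-1} = \tilde\alpha^{-(2^{m_1}-1)} y^{2^{m_1}-1}$, and since $\tilde\alpha \in U_1$ satisfies $\tilde\alpha^{2^{m_1}+1}=1$, we get $\tilde\alpha^{-(2^{m_1}-1)} = \tilde\alpha^{-(2^{m_1}-1)}\tilde\alpha^{2^{m_1}+1} = \tilde\alpha^{2}=\alpha$; hence $a x^{2^{m_1}-1} = a\,\alpha\, y^{2^{m_1}-1}$... which is the \emph{wrong} direction, so in fact one substitutes $x = \tilde\alpha y$ (not $\tilde\alpha^{-1}y$) to obtain $\tilde\alpha^{2^{m_1}-1} = \tilde\alpha^{-2} = \alpha^{-1}$, turning $a x^{2^{m_1}-1}$ into $(a\alpha^{-1}) y^{2^{m_1}-1} = \tilde a\, y^{2^{m_1}-1}$ exactly, since $a = \alpha\tilde a$. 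This is the crux computation: the exponent $2^{m_1}-1$ is killed on $U_1$ precisely because $2^{m_1}-1 \equiv -2 \pmod{2^{m_1}+1}$, so raising a $U_1$-element to this power inverts its square.

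Finally, for the cubic term I would use $\mulch[m_0]{x} = x^{(2^{m_0}-1)/3}$, which is a multiplicative homomorphism, so $\mulch[m_0]{\tilde\alpha y} = \mulch[m_0]{\tilde\alpha}\mulch[m_0]{y}$; writing $\mulch[m_0]{\tilde\alpha}$ in terms of $\mulch[m_0]{\alpha}$ (since $\tilde\alpha^2=\alpha$ and cubing/squaring interact on the order-$3$ value group, $\mulch[m_0]{\tilde\alpha}^2 = \mulch[m_0]{\alpha}$, and in $\GF[4]{}^*$ squaring is inversion, so $\mulch[m_0]{\tilde\alpha} = \mulch[m_0]{\alpha}^{-1} = \beta$) gives $\mulch[m_0]{x} = \beta\,\mulch[m_0]{y}$, hence $\tr{2}{b\mulch[m_0]{x}} = \tr{2}{(\beta b)\mulch[m_0]{y}}$. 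Reassembling the three pieces, the summand becomes $\addch{\tr{m_0}{\tilde a y^{2^{m_1}-1}} + \tr{2}{(\beta b)\mulch[m_0]{y}} + \tr{m_0}{(\tilde\alpha^{-1}\omega) y}}$... and after reconciling the sign on $\tilde\alpha$ with the paper's convention (replacing $\tilde\alpha$ by its inverse, still a square root of $\alpha$ up to the $U_1$-squaring map), this is exactly the summand of $\Wa{f_{\tilde a,\beta b}}(\tilde\alpha\omega)$, and adding back the common $x=0{=}y=0$ term completes the identity.

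The main obstacle I anticipate is purely bookkeeping: getting the direction of the substitution and the choice between $\tilde\alpha$ and $\tilde\alpha^{-1}$ consistent across all three terms simultaneously, so that the single change of variable produces $\tilde a$ in the Dillon term, $\beta b$ in the cubic term, \emph{and} $\tilde\alpha\omega$ in the linear term with no residual factors. Once the relations $\tilde\alpha^{2^{m_1}-1} = \alpha^{-1}$ on $U_1$ and $\mulch[m_0]{\tilde\alpha} = \beta$ are pinned down, the rest is immediate.
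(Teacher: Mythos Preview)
Your approach is exactly the paper's: substitute $x \mapsto \tilde{\alpha} x$, use $\tilde{\alpha}^{2^{m_1}-1} = \tilde{\alpha}^{-2} = \alpha^{-1}$ to turn the Dillon coefficient into $\tilde a$, and use $\mulch[m_0]{\tilde\alpha} = \mulch[m_0]{\alpha}^{-1} = \beta$ for the cubic term.

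There is, however, a bookkeeping slip in your final assembly. Once you commit to the substitution $x = \tilde\alpha y$, the linear term is simply $\tr{m_0}{\omega x} = \tr{m_0}{(\tilde\alpha\omega) y}$, so the evaluation point is already $\tilde\alpha\omega$ with no residual factor and no ``reconciling'' is needed. Your claim that one can swap $\tilde\alpha$ for $\tilde\alpha^{-1}$ because the latter is ``still a square root of $\alpha$ up to the $U_1$-squaring map'' is incorrect: $\tilde\alpha^{-1}$ is a square root of $\alpha^{-1}$, not of $\alpha$. Drop that step and the three pieces align immediately, matching the paper's proof verbatim.
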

\begin{proof}
Indeed, $x \mapsto \tilde{\alpha} x$ induces a permutation of $\GF{m_0}$,
$\tilde{\alpha}^{2^{m_1}-1} = \tilde{\alpha}^{-2} = \alpha^{-1}$,
and $\mulch[m_0]{\tilde{\alpha}} = \mulch[m_0]{\alpha}^{-1}$,
so that
\begin{align*}
\Wa{f_{a,b}}(\omega) & = \sum_{x \in \GF{m_0}} \addch{f_{a,b}(x) + \tr{m_0}{\omega x}} \\
& = \sum_{x \in \GF{m_0}} \addch{f_{a,b}(\tilde{\alpha} x) + \tr{m_0}{\omega \tilde{\alpha} x}} \\
& = \sum_{x \in \GF{m_0}} \addch{f_{\tilde{a},\beta b}(x) + \tr{m_0}{\omega \tilde{\alpha} x}} \\
& = \Wa{f_{\tilde{a},\beta b}}(\tilde{\alpha} \omega) \enspace . \qedhere
\end{align*}
\end{proof}
From now on we can suppose that $a \in \GF{m_1}^*$ without loss of generality.

\subsection{Polar decomposition}

The polar decomposition yields the following expression
for $f_{a,b}$.
\begin{lemma}
For $\nu \geq 1$, $a \in \GF{m_0}^*$ and $b \in \GF[4]{}^*$,
and $x \in \GF{m_0}^*$,
$f_{a,b}(x)$ is
\begin{align}
f_{a,b}(x) & = f_{a,b}(u) = f_a(u_1) + g_b(u_\nu) \enspace .
\end{align}
\end{lemma}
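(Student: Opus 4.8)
The plan is to split $f_{a,b}$ into its two additive pieces and show that each one factors through a single block of the polar decomposition. Recall that $f_{a,b} = f_a + g_b$ by definition, so it suffices to prove $f_a(x) = f_a(u_1)$ and $g_b(x) = g_b(u_\nu)$ separately. Fix $x \in \GF{m_0}^*$ and write it through the decomposition $\GF{m_0}^* \simeq U_1 \times \cdots \times U_\nu \times \GF{m_\nu}^*$ as $x = u = u_1 \cdots u_\nu v$ with $u_i \in U_i$ and $v \in \GF{m_\nu}^*$; collapsing all factors but $U_1$, we may also write $x = u_1 w$ with $w = u_2 \cdots u_\nu v \in \GF{m_1}^*$ via the coarser decomposition $\GF{m_0}^* \simeq U_1 \times \GF{m_1}^*$.

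For the monomial part, I would evaluate $x^{2^{m_1}-1}$ factor by factor. Since $w \in \GF{m_1}$, the Frobenius fixes it and $w^{2^{m_1}-1} = 1$; since $u_1 \in U_1$ is a $(2^{m_1}+1)$-th root of unity, $u_1^{2^{m_1}} = u_1^{-1}$ and hence $u_1^{2^{m_1}-1} = u_1^{-2}$. Therefore $x^{2^{m_1}-1} = u_1^{2^{m_1}-1}\, w^{2^{m_1}-1} = u_1^{2^{m_1}-1}$, so that $f_a(x) = \tr{m_0}{a x^{2^{m_1}-1}} = \tr{m_0}{a u_1^{2^{m_1}-1}} = f_a(u_1)$ for every $a \in \GF{m_0}^*$.

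For the character part, I would use that $\mulch[m_0]{}$ is multiplicative, so $\mulch[m_0]{x} = \mulch[m_0]{u_1}\cdots\mulch[m_0]{u_\nu}\,\mulch[m_0]{v}$, and then invoke the remarks of Section~\ref{sec:characters}: $\mulch[m_0]{}$ is trivial on $\GF{m_\nu}^*$ and on each $U_i$ with $1 \leq i < \nu$, so only the factor $\mulch[m_0]{u_\nu}$ survives. Hence $\mulch[m_0]{x} = \mulch[m_0]{u_\nu}$ and $g_b(x) = \tr{2}{b\mulch[m_0]{x}} = \tr{2}{b\mulch[m_0]{u_\nu}} = g_b(u_\nu)$. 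Adding the two identities yields $f_{a,b}(x) = f_a(u_1) + g_b(u_\nu)$. There is no real obstacle here; the only thing to keep straight is the bookkeeping of the polar-decomposition components, together with the two elementary observations that the exponent $2^{m_1}-1$ kills the $\GF{m_1}^*$-part and that the cubic character $\mulch[m_0]{}$ kills everything outside $U_\nu$.
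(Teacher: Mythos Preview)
Your proof is correct and follows essentially the same approach as the paper: split $f_{a,b}=f_a+g_b$, observe that the exponent $2^{m_1}-1$ annihilates the $\GF{m_1}^*$-part so $f_a$ depends only on $u_1$, and use that the cubic character $\mulch[m_0]{}$ is trivial off $U_\nu$ so $g_b$ depends only on $u_\nu$. The paper's version simply states these two facts without unpacking them, whereas you spell out the computations explicitly.
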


\begin{proof}
Notice that $f_{a,b}(x) = f_a(x) + g_b(x)$.
Moreover $f_a$ is trivial on $\GF{m_1}^*$
and $g_b$ is trivial everywhere but on $U_\nu$
as noted in Section~\ref{sec:characters}.
\end{proof}

We now split the sum expressing the Walsh transform of $f_{a,b}$
at $\omega \in \GF{m_0}$ using the polar decomposition of $\GF{m_0}^*$
as $\GF{m_0}^* \simeq U \times \GF{m_\nu}^*$.
We write $x \in \GF{m_0}^*$ as $x = u y$
for $u \in U$, and $y \in \GF{m_\nu}^*$.

\begin{lemma}
For $\nu \geq 1$, $a \in \GF{m_1}^*$ and $b \in \GF[4]{}^*$,
the Walsh transform of $f_{a,b}$ at $\omega \in \GF{m_0}$ is,
for $\omega = 0$:
\begin{align}
\Wa{f_{a,b}}(0) & = 1 + \left( 2^{m_\nu} - 1 \right) \sum_{u \in U} \addch{f_{a,b}(u)} \enspace , \label{eq:walshzero}
\end{align}
and for $\omega \neq 0$:
\begin{align}
\Wa{f_{a,b}}(\omega)
& = 1 - \sum_{u \in U} \addch{f_{a,b}(u)} + 2^{m_\nu} \sum_{u \in U, \tr[m_\nu]{m_0}{\omega u} = 0} \addch{f_{a,b}(u)} \enspace . \label{eq:walshunit}
\end{align}
\end{lemma}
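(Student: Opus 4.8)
The plan is to split the Walsh sum over $\GF{m_0}$ according to whether $x = 0$ or $x \in \GF{m_0}^*$, and on the nonzero part to use the polar decomposition $\GF{m_0}^* \simeq U \times \GF{m_\nu}^*$ together with the factorization of $f_{a,b}$ from the previous lemma. First I would write
\begin{align*}
\Wa{f_{a,b}}(\omega)
& = \addch{f_{a,b}(0) + \tr{m_0}{0}} + \sum_{x \in \GF{m_0}^*} \addch{f_{a,b}(x) + \tr{m_0}{\omega x}}
\\
& = 1 + \sum_{u \in U} \sum_{y \in \GF{m_\nu}^*} \addch{f_{a,b}(u y) + \tr{m_0}{\omega u y}} \enspace ,
\end{align*}
using $f_{a,b}(0) = 0$. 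The key point is that $f_{a,b}(uy) = f_{a,b}(u)$: indeed $f_a$ is trivial on $\GF{m_1}^* \supseteq \GF{m_\nu}^*$ and $g_b$ is trivial on $\GF{m_\nu}^*$ (as $y \mapsto y^3$ permutes $\GF{m_\nu}^*$, so $\mulch[m_0]{y} = 1$ there), so multiplying the $U$-part by $y \in \GF{m_\nu}^*$ does not change the value; more directly, the previous lemma gives $f_{a,b}(uy) = f_a(u_1) + g_b(u_\nu)$ which does not involve $y$. Hence the exponential splits and
\begin{align*}
\Wa{f_{a,b}}(\omega)
& = 1 + \sum_{u \in U} \addch{f_{a,b}(u)} \sum_{y \in \GF{m_\nu}^*} \addch{\tr{m_0}{\omega u y}} \enspace .
\end{align*}

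Next I would evaluate the inner character sum over $y \in \GF{m_\nu}^*$. Using transitivity of the trace, $\tr{m_0}{\omega u y} = \tr{m_\nu}{\tr[m_\nu]{m_0}{\omega u y}} = \tr{m_\nu}{y \tr[m_\nu]{m_0}{\omega u}}$ since $y \in \GF{m_\nu}$, so the inner sum is $\sum_{y \in \GF{m_\nu}^*} \addch{\tr{m_\nu}{c y}}$ with $c = \tr[m_\nu]{m_0}{\omega u} \in \GF{m_\nu}$. This is a standard sum: it equals $2^{m_\nu} - 1$ if $c = 0$ and $-1$ if $c \neq 0$ (the full sum over $\GF{m_\nu}$ is $0$ or $2^{m_\nu}$, and we have removed the $y = 0$ term which contributes $1$). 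Substituting gives
\begin{align*}
\Wa{f_{a,b}}(\omega)
& = 1 + \sum_{\substack{u \in U \\ \tr[m_\nu]{m_0}{\omega u} = 0}} \addch{f_{a,b}(u)} \left( 2^{m_\nu} - 1 \right) + \sum_{\substack{u \in U \\ \tr[m_\nu]{m_0}{\omega u} \neq 0}} \addch{f_{a,b}(u)} \left( -1 \right) \enspace .
\end{align*}

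Now I would separate the two cases. When $\omega = 0$, the condition $\tr[m_\nu]{m_0}{\omega u} = 0$ holds for every $u$, so only the first sum survives and we obtain $\Wa{f_{a,b}}(0) = 1 + (2^{m_\nu} - 1) \sum_{u \in U} \addch{f_{a,b}(u)}$, which is \eqref{eq:walshzero}. When $\omega \neq 0$, I would rewrite the two sums over $u$ with a common summand: adding and subtracting $\sum_{u \in U, \tr[m_\nu]{m_0}{\omega u} = 0} \addch{f_{a,b}(u)}$ lets me combine the conditional sums into $- \sum_{u \in U} \addch{f_{a,b}(u)} + 2^{m_\nu} \sum_{u \in U, \tr[m_\nu]{m_0}{\omega u} = 0} \addch{f_{a,b}(u)}$, giving \eqref{eq:walshunit}. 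There is no real obstacle here; the only point requiring a little care is the justification that $f_{a,b}(uy)$ is independent of $y$ — but this is exactly the content of the preceding lemma applied with $x = uy$, whose $U$-coordinates are those of $u$ and whose $\GF{m_\nu}^*$-coordinate is absorbed — and the evaluation of the geometric-type character sum over $\GF{m_\nu}^*$, which is classical.
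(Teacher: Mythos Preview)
Your proof is correct and follows essentially the same approach as the paper: split off $x=0$, use the polar decomposition $x=uy$ with $u\in U$ and $y\in\GF{m_\nu}^*$, invoke the previous lemma to see that $f_{a,b}(uy)=f_{a,b}(u)$, and evaluate the inner character sum over $\GF{m_\nu}^*$ via transitivity of the trace. The only cosmetic difference is that you make the recombination for $\omega\neq 0$ explicit, whereas the paper simply states the two values of the inner sum and leaves the final regrouping to the reader.
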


\begin{proof}
Using the polar decomposition, the Walsh transform of $f_{a,b}$ at $\omega \in \GF{m_0}$ can indeed be written
\begin{align*}
\Wa{f_{a,b}}(\omega) & = \sum_{x \in \GF{m_0}} \addch{f_{a,b}(x) + \tr{m_0}{\omega x}} \\
& = 1 + \sum_{x \in \GF{m_0}^*} \addch{f_{a,b}(x) + \tr{m_0}{\omega x}} \\
& = 1 + \sum_{(u, y) \in U \times \GF{m_\nu}^*} \addch{f_{a,b}(u y)} \addch{\tr{m_0}{\omega u y}} \enspace .
\end{align*}
Note that $3$ divides $2^{m_\nu}+1$ so that $\frac{2^{m_0}-1}{3} = (2^{m_\nu}-1) \frac{2^{m_\nu}+1}{3} \prod_{i=1}^{\nu-1}(2^{m_i}+1)$ and $f_{a,b}(u y) = f_{a,b}(u)$.
Therefore
\begin{align*}
\Wa{f_{a,b}}(\omega) & = 1 + \sum_{u \in U} \addch{f_{a,b}(u)} \sum_{y \in \GF{m_\nu}^*} \addch{\tr{m_\nu}{\tr[m_\nu]{m_0}{\omega u} y}} \enspace .
\end{align*}
The sum ranging over $\GF{m_\nu}^*$ is equal to $-1$ when $\tr[m_\nu]{m_0}{\omega u} \neq 0$ and $2^{m_1}-1$ when $\tr[m_\nu]{m_0}{\omega u} = 0$.
In particular, when $\omega = 0$,  the trace is $0$ for all $u \in U$.
\end{proof}

To go further, the cases $\nu = 1$ and $\nu > 1$ have to be dealt with separately.

\section{Odd case}
\label{sec:odd}

In this section, it is supposed that $\nu = 1$, \ie $m_1$ is odd and $U = U_1$,
which is the case that Mesnager settled~\cite{DBLP:journals/dcc/Mesnager11,DBLP:journals/tit/Mesnager11} with the following proposition.
We recall the main ingredients and results of her work as
similar ideas will be used for the even case.

\begin{proposition}[\cite{DBLP:journals/dcc/Mesnager11,DBLP:journals/tit/Mesnager11}]
For $\nu = 1$, $a \in \GF{m_1}^*$ and $b \in \GF[4]{}^*$,
the Walsh transform of $f_{a,b}$ at $\omega \in \GF{m_0}$ is,
for $\omega = 0$:
\begin{align}
\Wa{f_{a,b}}(0)
= \left\{
\begin{array}{ll}
1 + \frac{2^{m_1}-1}{3} \left( 1 - K_{m_1}(a) - 4 C_{m_1}(a, a) \right) & \text{if $b = 1$,} \\
1 + \frac{2^{m_1}-1}{3} \left( 1 - K_{m_1}(a) + 2 C_{m_1}(a, a) \right) & \text{if $b \neq 1$,}
\end{array}
\right.
\end{align}
and for $\omega \neq 0$:
\begin{align}
\Wa{f_{a,b}}(\omega)
& = \left\{
\begin{array}{ll}
1 + 2^{m_1} \addch{f_{a,b}(w_1^{-1})} + \frac{1}{3} \left( 1 - K_{m_1}(a) - 4 C_{m_1}(a, a) \right) & \text{if $b = 1$,}\\
1 + 2^{m_1} \addch{f_{a,b}(w_1^{-1})} + \frac{1}{3} \left( 1 - K_{m_1}(a) + 2 C_{m_1}(a, a) \right) & \text{if $b \neq 1$.}
\end{array}
\right.
\end{align}
\end{proposition}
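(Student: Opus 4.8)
The plan is to start from the two expressions for $\Wa{f_{a,b}}$ obtained in \eqref{eq:walshzero} and \eqref{eq:walshunit}, specialised to $\nu=1$ (so $U=U_1$ and $m_\nu=m_1$), and to evaluate in closed form the two sums occurring there: $S:=\sum_{u\in U_1}\addch{f_{a,b}(u)}$ and, for $\omega\ne0$, the restricted sum $\sum_{u\in U_1, \tr[m_1]{m_0}{\omega u}=0}\addch{f_{a,b}(u)}$. Substituting the outcome back into \eqref{eq:walshzero} and \eqref{eq:walshunit} then produces the stated formulas.

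The restricted sum is the easy part. Writing $\omega=w_1 y$ with $w_1\in U_1$ and $y\in\GF{m_1}^*$, for $u\in U_1$ one has $\tr[m_1]{m_0}{\omega u}=\omega u+\omega^{2^{m_1}}u^{-1}$, which vanishes exactly when $u^2=\omega^{2^{m_1}-1}=w_1^{-2}$; since $2^{m_1}+1$ is odd, squaring is a bijection of $U_1$, so the unique such $u$ is $w_1^{-1}$, and the restricted sum is the single term $\addch{f_{a,b}(w_1^{-1})}$, which accounts for the summand $2^{m_1}\addch{f_{a,b}(w_1^{-1})}$ when $\omega\ne0$.

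The heart of the argument is the computation of $S$. Since $f_{a,b}=f_a+g_b$ we have $\addch{f_{a,b}(u)}=\addch{f_a(u)}\addch{g_b(u)}$, and because $g_b(u)=\tr{2}{b\mulch[m_0]{u}}$ while $\tr{2}{y}$ vanishes on $\GF[4]{}^*$ only at $y=1$, the factor $\addch{g_b(u)}$ equals $+1$ on the coset $\{u\in U_1:\mulch[m_0]{u}=b^{-1}\}$ and $-1$ on the other two cosets of $U_1$ modulo $U_1^3$ (recall from Section~\ref{sec:characters} that $\mulch[m_0]{}$ is the cubic character of $U_1$, with kernel the group $U_1^3$ of cubes). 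Writing $\Sigma_c:=\sum_{u\in U_1, \mulch[m_0]{u}=c}\addch{f_a(u)}$ for $c\in\GF[4]{}^*$, with $\zeta$ a fixed generator of $\GF[4]{}^*$, this yields $S=2\Sigma_{b^{-1}}-\sum_{u\in U_1}\addch{f_a(u)}$, so everything is reduced to the total sum and the three coset sums. Three observations settle them. (i) On $U_1$ one has $u^{2^{m_1}-1}=u^{-2}$ and $u\mapsto u^{-2}$ permutes $U_1$, so by the identity of Section~\ref{sec:kloo} the total sum is $\sum_{u\in U_1}\addch{f_a(u)}=\sum_{v\in U_1}\addch{\tr{m_0}{av}}=1-K_{m_1}(a)$. (ii) The involution $u\mapsto u^{-1}$ fixes $U_1^3$, swaps the two nontrivial cosets, and leaves $\addch{f_a(u)}$ unchanged (indeed $f_a(u^{-1})=f_a(u)$ for $u\in U_1$, both being $\tr{m_0}{au^{-2}}$, using $u^{2^{m_1}}=u^{-1}$ and $a\in\GF{m_1}$); hence $\Sigma_\zeta=\Sigma_{\zeta^2}$. (iii) The map $v\mapsto v^3$ is $3$-to-$1$ from $U_1$ onto $U_1^3$, so $\Sigma_1=\tfrac13\sum_{v\in U_1}\addch{f_a(v^3)}$, and writing $\tr{m_0}{\cdot}=\tr{m_1}{\tr[m_1]{m_0}{\cdot}}$ and using $v^{2^{m_1}}=v^{-1}$ on $U_1$ together with $D_3(x+x^{-1})=x^3+x^{-3}$ gives $f_a(v^3)=\tr{m_1}{a\,D_3(v+v^{-1})^2}$.

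The remaining work is (iii). By Hilbert's Theorem~90, as $v$ runs over $U_1\setminus\{1\}$ the element $t=v+v^{-1}$ runs $2$-to-$1$ over $\T_{m_1}^1$, while $v=1$ contributes $1$; combined with $D_3(t)^2=D_3(t^2)$ and the fact that squaring permutes $\T_{m_1}^1$, this yields $\sum_{v\in U_1}\addch{f_a(v^3)}=1+2\sum_{t\in\T_{m_1}^1}\addch{\tr{m_1}{a\,D_3(t)}}$. Splitting $C_{m_1}(a,a)=\sum_{t\in\GF{m_1}}\addch{\tr{m_1}{a\,D_3(t)}}$ over $\T_{m_1}^0\sqcup\T_{m_1}^1$, and using that for $m_1$ odd $D_3$ permutes $\T_{m_1}^0$, so that $\sum_{t\in\T_{m_1}^0}\addch{\tr{m_1}{a\,D_3(t)}}=\sum_{t\in\T_{m_1}^0}\addch{\tr{m_1}{a\,t}}=K_{m_1}(a)/2$ again by the identity of Section~\ref{sec:kloo}, one gets $\sum_{t\in\T_{m_1}^1}\addch{\tr{m_1}{a\,D_3(t)}}=C_{m_1}(a,a)-K_{m_1}(a)/2$ and hence $\Sigma_1=\tfrac13(1-K_{m_1}(a)+2C_{m_1}(a,a))$; with (i) and (ii) this forces $\Sigma_\zeta=\Sigma_{\zeta^2}=\tfrac13(1-K_{m_1}(a)-C_{m_1}(a,a))$. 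Feeding $S=2\Sigma_{b^{-1}}-(1-K_{m_1}(a))$ into \eqref{eq:walshzero} and \eqref{eq:walshunit}, treating $b=1$ and $b\ne1$ separately, gives the announced expressions after routine simplification. The step I expect to be the main obstacle is precisely (iii): reducing the $\GF{m_0}/\GF{m_1}$-trace correctly through the cubing substitution, and then keeping the Hilbert~90 and Dickson bookkeeping straight — in particular deciding at each stage whether one lands in $\T_{m_1}^0$ or $\T_{m_1}^1$, and invoking the permutation property of $D_3$ on $\T_{m_1}^0$ (valid because $m_1$ is odd) exactly once.
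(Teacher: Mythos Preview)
Your proof is correct and follows essentially the same route as the paper: both start from \eqref{eq:walshzero}--\eqref{eq:walshunit}, identify the restricted sum with the single term at $u_1=w_1^{-1}$, split $\sum_{u_1\in U_1}\addch{f_{a,b}(u_1)}$ according to the cubic character, recognise the total as $1-K_{m_1}(a)$, and then compute the coset sum $\Sigma_1$ via the cubing substitution and the Dickson/Hilbert~90 machinery. The paper merely asserts the value of $\Sigma_1$ ``using properties of the Dickson polynomial $D_3$'', whereas you actually carry out that computation in step~(iii); the symmetry argument in your step~(ii) is exactly the paper's ``$\Snu[1]$ takes the same value for both $b\neq 1$''.
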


\begin{proof}
For $\omega \neq 0$, $\tr[m_1]{m_0}{\omega u_1} = 0$ if and only if $u_1 = w_1^{-1}$, so that
\begin{align*}
\sum_{u_1 \in U_1, \tr[m_1]{m_0}{\omega u_1} = 0} \addch{f_{a,b}(u_1)}
& = \addch{f_{a,b}(w_1^{-1})} \enspace .
\end{align*}

The only difficulty lies in the computation of $\sum_{u_1 \in U_1} \addch{f_{a,b}(u_1)}$ which can be done by splitting the sum on $U_1$ according to the value of $\mulch{u_1}$:
\begin{align*}
\sum_{u_1 \in U_1} \addch{f_{a,b}(u_1)}
& = \sum_{u_1 \in U_1} \addch{f_{a}(u_1)} \addch{g_b(u_1)} \\
& = \sum_{u_1\in U_1, b \mulch[m_0]{u_1} = 1} \addch{f_{a}(u_1)}
 - \sum_{u_1\in U_1, b \mulch[m_0]{u_1} \neq 1} \addch{f_{a}(u_1)} \\
& = 2 \sum_{u_1\in U_1, b \mulch[m_0]{u_1} = 1} \addch{f_{a}(u_1)}
 - \sum_{u_1\in U_1} \addch{f_{a}(u_1)} \enspace .
\end{align*}

As noted in Section~\ref{sec:kloo} the second sum is
\begin{align*}
\sum_{u_1 \in U_1} \addch{f_{a}(u_1)} & = 1 - K_{m_1}(a) \enspace .
\end{align*}

As far as the first one is concerned, let us denote it $\Snu[1]$.
As $m_1$ is odd,
using properties of the Dickson polynomial $D_3$ given in Section~\ref{sec:dickson},
one can show that for $b = 1$:
\begin{align*}
\Snu[1]
& = \frac{1}{3} \left( 1 - K_{m_1}(a) + 2 C_{m_1}(a, a) \right) \enspace .
\end{align*}
As $\Snu[1]$ takes the same value for both $b \neq 1$,
one deduces that for $b \neq 1$:
\begin{align*}
\Snu[1]
& = \frac{1}{3} \left( 1 - K_{m_1}(a) - C_{m_1}(a, a) \right) \enspace .
\qedhere
\end{align*}
\end{proof}

Results of Carlitz~\cite{MR544577} on $C_{m_1}(a, a)$ when $m_1$ is odd
yield a concise and easy to compute the Walsh transform of $f_{a,b}$
at any $\omega \in \GF{m_0}$.

Together with Charpin \etal results~\cite{4595463,DBLP:journals/dm/CharpinHZ09}
and the Hasse--Weil bound on $K_{m_1}(a)$, these formulae prove that
$f_{a,b}$ is (hyper-)bent if and only if $K_{m_1}(a) = 4$
as was noted by Mesnager~\cite{DBLP:journals/dcc/Mesnager11,DBLP:journals/tit/Mesnager11}.
\begin{theorem}[\cite{DBLP:journals/dcc/Mesnager11,DBLP:journals/tit/Mesnager11}]
For $\nu = 1$, $a \in \GF{m}^*$ and $b \in \GF[4]{}^*$, the function $f_{a,b}$ is bent if and only if $K_{m_1}(a) = 4$.
\end{theorem}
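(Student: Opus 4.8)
The plan is to feed the explicit formulas of the preceding Proposition into the well-known criterion that a Boolean function over $\GF{m_0}$ is bent if and only if all of its Walsh values equal $2^{\pm m_1}$, and then to use arithmetic constraints on Kloosterman and cubic sums to collapse the resulting (a priori infinitely many) equations into the single condition $K_{m_1}(a) = 4$. The tools I would keep at hand are the Hasse--Weil bound $\card{K_{m_1}(a)} \leq 2^{m_1/2+1}$; Carlitz's evaluation~\cite{MR544577} of $C_{m_1}(a, a)$ for odd $m_1$, which in particular forces $C_{m_1}(a, a) \in \set{0, \pm 2^{(m_1+1)/2}}$; and the equivalence recalled in Section~\ref{sec:cubic}, that $C_{m_1}(a, a) = 0$ if and only if $K_{m_1}(a) \equiv 1 \pmod 3$.

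First I would rewrite the Proposition in the uniform shape
\[
\Wa{f_{a,b}}(0) = 1 + (2^{m_1} - 1)\, \sigma, \qquad \Wa{f_{a,b}}(\omega) = 1 - \sigma + 2^{m_1} \addch{f_{a,b}(w_1^{-1})} \quad (\omega \neq 0),
\]
where $\sigma = \sum_{u_1 \in U_1} \addch{f_{a,b}(u_1)} \in \Z$ is the integer combination of $1$, $K_{m_1}(a)$ and $C_{m_1}(a, a)$ read off from the Proposition. Because $\sigma$ is an integer and $2^{m_1} - 1$ does not divide $2^{m_1} + 1$ when $m_1 \geq 2$, the requirement $\Wa{f_{a,b}}(0) \in \set{2^{m_1}, -2^{m_1}}$ already forces $\sigma = 1$ (and then $\Wa{f_{a,b}}(0) = 2^{m_1}$); conversely, once $\sigma = 1$, each value $\Wa{f_{a,b}}(\omega)$ with $\omega \neq 0$ equals $2^{m_1} \addch{f_{a,b}(w_1^{-1})} = \pm 2^{m_1}$ automatically. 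Hence $f_{a,b}$ is bent if and only if $\sigma = 1$, which by the Proposition is a single linear relation: clearing denominators, it reads $K_{m_1}(a) + 4 C_{m_1}(a, a) = 4$ when $b = 1$, and $K_{m_1}(a) - 2 C_{m_1}(a, a) = 4$ when $b \neq 1$.

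It then remains to see that each of these relations is equivalent to $K_{m_1}(a) = 4$. If $K_{m_1}(a) = 4$ then $K_{m_1}(a) \equiv 1 \pmod 3$, hence $C_{m_1}(a, a) = 0$ by the equivalence recalled in Section~\ref{sec:cubic}, so both relations hold; substituting $K_{m_1}(a) = 4$ and $C_{m_1}(a, a) = 0$ back into the Proposition displays every Walsh value as $\pm 2^{m_1}$, so $f_{a,b}$ is bent. For the converse, suppose the relevant relation holds and $C_{m_1}(a, a) \neq 0$; then $\card{C_{m_1}(a, a)} = 2^{(m_1+1)/2}$ by Carlitz, while the relation gives $K_{m_1}(a) = 4 + \lambda\, C_{m_1}(a, a)$ with $\lambda \in \set{-4, 2}$, so $\card{K_{m_1}(a)} \geq 2 \cdot 2^{(m_1+1)/2} - 4 = 2^{(m_1+3)/2} - 4$, which exceeds the Hasse--Weil bound $2^{m_1/2+1}$ once $m_1$ is large enough --- a contradiction. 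Hence $C_{m_1}(a, a) = 0$ and $K_{m_1}(a) = 4$.

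The main obstacle is precisely this dependence of the last step on $m_1$ being large enough: the inequality $2^{(m_1+3)/2} - 4 > 2^{m_1/2+1}$ only holds above an explicit small bound, so the finitely many small odd $m_1$ below it must be treated apart, by direct inspection of the then small field $\GF{m_0}$. Everything else is a mechanical substitution once the Proposition, Carlitz's evaluations of the cubic sums, and the congruence for $K_{m_1}(a)$ of Charpin \etal are in place; the genuine work of expressing the Walsh transform through $K_{m_1}(a)$ and $C_{m_1}(a, a)$ has already been done in the Proposition, and the Hasse--Weil bound is what makes it possible to rule out $C_{m_1}(a, a) \neq 0$ rather than merely restrict it.
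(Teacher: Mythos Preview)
Your proposal is correct and follows exactly the route the paper itself only sketches: the paper does not give a self-contained argument but simply states that the Proposition together with Carlitz's values for $C_{m_1}(a,a)$, the congruence result of Charpin \etal, and the Hasse--Weil bound yield Mesnager's theorem. You have supplied precisely those details: the reduction of bentness to the single condition $\sigma = 1$ via Equations~(\ref{eq:walshzero}) and~(\ref{eq:walshunit}), the translation of $\sigma = 1$ into a linear relation between $K_{m_1}(a)$ and $C_{m_1}(a,a)$, and the elimination of the case $C_{m_1}(a,a)\neq 0$ using Carlitz's value $|C_{m_1}(a,a)| = 2^{(m_1+1)/2}$ against the Weil bound. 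Your acknowledged residual check for small odd $m_1$ is indeed only needed when $b \neq 1$ and $m_1 = 3$ (for $b = 1$ the coefficient $4$ in front of $C_{m_1}(a,a)$ makes the inequality hold for every odd $m_1 \geq 1$), and that single case is handled by direct inspection of $\GF[2]{6}$.
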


\section{Even case}

\subsection{General extension degree}

In this section, it is supposed that $\nu > 1$, \ie both $m_0$ and $m_1$ are even.
The main difference with the case $\nu = 1$ is that $3$ does now divide $2^{m_1}-1$ (in fact $2^{m_\nu}+1$) rather than $2^{m_1}+1$,
 and $\mulch[m_0]{u}$ does not depend on the value of $u_1$ (but only on that of $u_\nu$).

In particular, the computation of $\sum_{u \in U} f_{a,b}(u)$ becomes straightforward.
\begin{lemma}
For $\nu > 1$, $a \in \GF{m_1}^*$ and $b \in \GF[4]{}^*$,
\begin{align}
\sum_{u \in U} \addch{f_{a,b}(u)}
& = - \frac{2^{2^{\nu-1} m_\nu} - 1}{3 \left(2^{m_\nu} - 1\right)} \left( 1 - K_{m_1}(a) \right) \enspace . \label{eq:sumfab}
\end{align}
\end{lemma}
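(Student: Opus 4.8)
The plan is to compute $\sum_{u \in U} \addch{f_{a,b}(u)}$ by exploiting the product structure $U \simeq U_1 \times \cdots \times U_\nu$ together with the fact, noted at the start of this section, that for $\nu > 1$ the value $\mulch[m_0]{u}$ depends only on the component $u_\nu \in U_\nu$, whereas $f_a(u)$ depends only on $u_1 \in U_1$ (by the polar decomposition lemma, $f_{a,b}(u) = f_a(u_1) + g_b(u_\nu)$). This immediately factorizes the sum as a product over the $U_i$.

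First I would write
\begin{align*}
\sum_{u \in U} \addch{f_{a,b}(u)}
& = \left( \sum_{u_1 \in U_1} \addch{f_a(u_1)} \right)
    \left( \prod_{i=2}^{\nu-1} \card{U_i} \right)
    \left( \sum_{u_\nu \in U_\nu} \addch{g_b(u_\nu)} \right) ,
\end{align*}
since the $g_b$ factor is constant in $u_2, \dots, u_{\nu-1}$ and the $f_a$ factor is constant in $u_2, \dots, u_\nu$. The first factor is $1 - K_{m_1}(a)$ by the Kloosterman identity recalled in Section~\ref{sec:kloo}. The middle factor is $\prod_{i=2}^{\nu-1}(2^{m_i}+1)$. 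For the last factor, recall from Section~\ref{sec:characters} that $x \mapsto x^3$ is $3$-to-$1$ on $U_\nu$, so $\mulch[m_0]{}$ restricted to $U_\nu$ is a surjective homomorphism onto the cube roots of unity with fibers of equal size $\card{U_\nu}/3 = (2^{m_\nu}+1)/3$; hence for any $b \in \GF[4]{}^*$, $\sum_{u_\nu \in U_\nu} \addch{g_b(u_\nu)} = \sum_{u_\nu \in U_\nu} \addch{\tr{2}{b\mulch[m_0]{u_\nu}}} = \tfrac{2^{m_\nu}+1}{3}\sum_{\zeta^3 = 1} \addch{\tr{2}{b\zeta}}$. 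Since $b$ ranges over $\GF[4]{}^* = \{1, \beta, \beta^2\}$ where $\beta$ generates the cube roots, $\sum_{\zeta^3=1} \addch{\tr{2}{b\zeta}} = \sum_{\zeta^3=1}\addch{\tr{2}{\zeta}} = 1 + 2\addch{\tr{2}{\beta}} = 1 + 2(-1) = -1$ (using $\tr{2}{}$ of the two primitive cube roots in $\GF[4]{}$ equals $1$). So the last factor equals $-(2^{m_\nu}+1)/3$.

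Multiplying the three factors gives
\[
\sum_{u \in U} \addch{f_{a,b}(u)}
= (1 - K_{m_1}(a)) \cdot \prod_{i=2}^{\nu-1}(2^{m_i}+1) \cdot \left( -\frac{2^{m_\nu}+1}{3} \right) .
\]
It remains the purely arithmetic check that $\tfrac{2^{m_\nu}+1}{3}\prod_{i=2}^{\nu-1}(2^{m_i}+1) = \tfrac{2^{m_{\nu-1}}-1}{3(2^{m_\nu}-1)}$ times $(2^{m_\nu}-1) \cdot \tfrac{1}{2^{m_\nu}-1}$; more directly, using $2^{m_{i-1}}-1 = (2^{m_i}-1)(2^{m_i}+1)$ repeatedly from $i = 2$ down, one obtains $(2^{m_\nu}-1)\prod_{i=2}^{\nu-1}(2^{m_i}+1) \cdot (2^{m_\nu}+1) = 2^{m_1}-1 = 2^{2^{\nu-1}m_\nu}-1$, so $\prod_{i=2}^{\nu-1}(2^{m_i}+1)\cdot\tfrac{2^{m_\nu}+1}{3} = \tfrac{2^{2^{\nu-1}m_\nu}-1}{3(2^{m_\nu}-1)}$, which is exactly the claimed coefficient. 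The main obstacle — though a mild one — is getting the normalizations of the multiplicative character exactly right, in particular verifying that the fibers of $\mulch[m_0]{}$ on $U_\nu$ genuinely have size $(2^{m_\nu}+1)/3$ and that the character sum over the cube roots of unity is $-1$ independently of $b \in \GF[4]{}^*$; everything else is bookkeeping with the telescoping identity for $2^{m_i} \pm 1$.
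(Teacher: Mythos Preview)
Your proof is correct and follows essentially the same approach as the paper: factor the sum via $U \simeq U_1 \times \cdots \times U_\nu$, evaluate the $U_1$-sum as $1-K_{m_1}(a)$, the $U_\nu$-sum as $-\tfrac{2^{m_\nu}+1}{3}$ via the equal fibers of $\mulch[m_0]{}$, and then telescope $\prod_{j=2}^{\nu}(2^{m_j}+1)=\tfrac{2^{m_1}-1}{2^{m_\nu}-1}$. The only cosmetic difference is that you spell out the character sum over $\GF[4]{}^*$ more explicitly than the paper does.
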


\begin{proof}
Splitting $U$ as $U \simeq U_1 \times \cdots \times U_\nu$, the sum can be rewritten:
\begin{align*}
\sum_{u \in U} \addch{f_{a,b}(u)}
& = \prod_{j=2}^{\nu-1} \left( 2^{m_j}+1 \right) \sum_{u_1 \in U_1} \addch{f_a(u_1)} \sum_{u_\nu \in U_\nu} \addch{g_b(u_\nu)}
\\
& = \prod_{j=2}^{\nu-1} \left( 2^{m_j}+1 \right) \frac{2^{m_\nu} + 1}{3} \left( 1 - K_{m_1}(a) \right) \sum_{c \in \GF[4]{}^*} \addch{\tr{2}{b c}}
\\
& = - \prod_{j=2}^{\nu-1} \left( 2^{m_j}+1 \right) \frac{2^{m_\nu} + 1}{3} \left( 1 - K_{m_1}(a) \right)
\enspace .
\end{align*}
Finally, using the identity $\left( 2^{2^j m_\nu} + 1 \right) \left( 2^{2^j m_\nu} - 1 \right) = \left( 2^{2^{j+1} m_\nu} - 1 \right)$, the product of the $\left(2^{m_j}+1 \right)$'s is
\begin{align*}
\prod_{j=2}^{\nu} \left( 2^{m_j}+1 \right)
& = \prod_{j=2}^{\nu} \left( 2^{2^{\nu-j} m_\nu}+1 \right)
= \frac{2^{2^{\nu-1} m_\nu} - 1}{2^{m_\nu} - 1}
\enspace .
\qedhere
\end{align*}
\end{proof}

The value of the Walsh transform at $\omega = 0$ given
by Equation~(\ref{eq:walshzero}) can now be simplified.
\begin{lemma}
For $\nu > 1$, $a \in \GF{m_1}^*$ and $b \in \GF[4]{}^*$,
the Walsh transform of $f_{a,b}$ at $\omega = 0$ is
\begin{align}
\Wa{f_{a,b}}(0)
& = 1 - \frac{2^{m_1} - 1}{3} \left( 1 - K_{m_1}(a) \right) \enspace .
\end{align}
\end{lemma}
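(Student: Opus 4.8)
The plan is to combine the two results just established: Equation~(\ref{eq:walshzero}), which expresses $\Wa{f_{a,b}}(0)$ in terms of the unit sum $\sum_{u \in U} \addch{f_{a,b}(u)}$, and Equation~(\ref{eq:sumfab}), which evaluates that unit sum in closed form. Since both are already proved, the argument is a direct substitution followed by simplification.

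Concretely, I would start from
\[
\Wa{f_{a,b}}(0) = 1 + \left( 2^{m_\nu} - 1 \right) \sum_{u \in U} \addch{f_{a,b}(u)}
\]
and plug in
\[
\sum_{u \in U} \addch{f_{a,b}(u)} = - \frac{2^{2^{\nu-1} m_\nu} - 1}{3 \left(2^{m_\nu} - 1\right)} \left( 1 - K_{m_1}(a) \right) \enspace .
\]
The factor $2^{m_\nu} - 1$ cancels the denominator, leaving
\[
\Wa{f_{a,b}}(0) = 1 - \frac{2^{2^{\nu-1} m_\nu} - 1}{3} \left( 1 - K_{m_1}(a) \right) \enspace .
\]

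The only remaining point — and it is the sole place where anything needs to be checked — is the exponent identity $2^{\nu-1} m_\nu = m_1$, which holds because $m_\nu = m = n/2^\nu$ and $m_1 = n/2$, so $2^{\nu-1} m_\nu = 2^{\nu-1} \cdot n/2^\nu = n/2 = m_1$. Substituting gives $2^{2^{\nu-1} m_\nu} - 1 = 2^{m_1} - 1$ and hence the claimed formula. I do not anticipate any genuine obstacle here; the content of the statement is entirely carried by the preceding two lemmas, and this result is essentially their immediate corollary specialised to $\omega = 0$.
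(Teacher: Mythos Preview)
Your proposal is correct and follows exactly the paper's approach: the lemma is stated as an immediate simplification obtained by substituting Equation~(\ref{eq:sumfab}) into Equation~(\ref{eq:walshzero}), with the cancellation of $2^{m_\nu}-1$ and the identity $2^{\nu-1} m_\nu = m_1$ yielding the result.
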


As noted by Mesnager~\cite{DBLP:journals/dcc/Mesnager11,DBLP:journals/tit/Mesnager11},
the Hasse--Weil bound on $K_{m_1}(a)$ implies that,
if $f_{a,b}$ is bent, then $\Wa{f_{a,b}}(0) = 2^{m_1}$
and $K_{m_1}(a) = 4$.
\begin{proposition}[\cite{DBLP:journals/dcc/Mesnager11,DBLP:journals/tit/Mesnager11}]
For $\nu > 1$, $a \in \GF{m_1}^*$ and $b \in \GF[4]{}^*$, if the function $f_{a,b}$ is bent, then $K_{m_1}(a) = 4$.
\end{proposition}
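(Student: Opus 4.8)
The plan is to extract the whole statement from the single value at $\omega = 0$. Since $f_{a,b}$ is assumed to be bent, it is well known that every value of its Walsh transform, and in particular $\Wa{f_{a,b}}(0)$, lies in $\set{2^{m_1}, -2^{m_1}}$. On the other hand, the lemma immediately preceding this proposition gives the closed form $\Wa{f_{a,b}}(0) = 1 - \frac{2^{m_1}-1}{3}\left(1 - K_{m_1}(a)\right)$, which I would rewrite as
\[
\frac{2^{m_1}-1}{3}\bigl(K_{m_1}(a) - 1\bigr) = \Wa{f_{a,b}}(0) - 1 \enspace ,
\]
keeping in mind that $\frac{2^{m_1}-1}{3}$ is a positive integer (because $m_1$ is even, so $3 \mid 2^{m_1}-1$) and that $K_{m_1}(a)$ is an integer.

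First I would treat the case $\Wa{f_{a,b}}(0) = 2^{m_1}$. Then the right-hand side above equals $2^{m_1}-1 = 3\cdot\frac{2^{m_1}-1}{3}$, so cancelling the integer $\frac{2^{m_1}-1}{3}$ forces $K_{m_1}(a) - 1 = 3$, that is $K_{m_1}(a) = 4$, which is exactly the desired conclusion.

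It then remains to exclude $\Wa{f_{a,b}}(0) = -2^{m_1}$. In that case the displayed identity would force $\frac{2^{m_1}-1}{3}$ to divide $2^{m_1}+1$; since $\gcd(2^{m_1}-1,\, 2^{m_1}+1) = 1$, this is impossible as soon as $\frac{2^{m_1}-1}{3} > 1$, i.e. for every even $m_1 \ge 4$ (equivalently, the value $K_{m_1}(a) = -2 - \frac{6}{2^{m_1}-1}$ produced by the formula then fails to be an integer). The only residual possibility is $m_1 = 2$, where the formula would require $K_{2}(a) = -4$; but the Hasse--Weil bound $\card{K_{m_1}(a) - 1} \le 2^{m_1/2 + 1}$ gives $K_2(a) \ge -3$, a contradiction. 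Hence $\Wa{f_{a,b}}(0) = 2^{m_1}$, and the previous step yields $K_{m_1}(a) = 4$.

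The argument is essentially a one-line computation followed by an elementary divisibility check, so I do not expect a real obstacle here; the only point needing an input beyond the earlier lemma is the disposal of the exceptional degree $m_1 = 2$, which is handled by the Hasse--Weil bound on Kloosterman sums exactly as in Mesnager's original treatment. In particular, nothing about $\Wa{f_{a,b}}$ at nonzero $\omega$ — and none of the cubic-sum estimates — is needed for this necessary condition.
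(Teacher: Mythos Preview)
Your argument is correct and follows the same route as the paper: evaluate $\Wa{f_{a,b}}(0)$ via the preceding lemma and rule out the value $-2^{m_1}$, leaving $K_{m_1}(a)=4$. The paper (following Mesnager) phrases the exclusion of $-2^{m_1}$ as a consequence of the Hasse--Weil bound, whereas you make the integrality/divisibility step explicit for $m_1\ge 4$ and invoke Hasse--Weil only for the residual case $m_1=2$; this is arguably cleaner, since for $m_1\ge 4$ the putative value $K_{m_1}(a)=1-3(2^{m_1}+1)/(2^{m_1}-1)\approx -2$ lies well inside the Hasse--Weil range and is really excluded by non-integrality.
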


Finally, the value of the Walsh transform at $\omega \neq 0$ given by Equation~(\ref{eq:walshunit}) is simplified as follows.
\begin{lemma}
For $\nu > 1$, $a \in \GF{m_1}^*$ and $b \in \GF[4]{}^*$,
the Walsh transform of $f_{a,b}$ at $\omega \in \GF{m_0}^*$ is
\begin{align}
\Wa{f_{a,b}}(\omega)
& = 1 + \frac{2^{2^{\nu-1} m_\nu} - 1}{3 \left(2^{m_\nu} - 1\right)} \left( 1 - K_{m_1}(a) \right)
+ 2^{m_\nu} \sum_{u \in U, \tr[m_\nu]{m_0}{\omega u} = 0} \addch{f_{a,b}(u)}
\enspace . \label{eq:sumu}
\end{align}
\end{lemma}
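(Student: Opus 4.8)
The plan is to obtain this formula as an immediate substitution, since the two ingredients needed have already been established. Equation~(\ref{eq:walshunit}) gives, for $\omega \neq 0$,
\[
\Wa{f_{a,b}}(\omega) = 1 - \sum_{u \in U} \addch{f_{a,b}(u)} + 2^{m_\nu} \sum_{u \in U,\ \tr[m_\nu]{m_0}{\omega u} = 0} \addch{f_{a,b}(u)} \enspace ,
\]
while Equation~(\ref{eq:sumfab}) evaluates the first sum as $-\dfrac{2^{2^{\nu-1} m_\nu} - 1}{3\left(2^{m_\nu} - 1\right)}\left(1 - K_{m_1}(a)\right)$ (this is the place where the hypothesis $\nu > 1$ is used, through the fact that $\mulch[m_0]{u}$ depends only on $u_\nu$).

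First I would substitute the latter into the former. The term $-\sum_{u \in U} \addch{f_{a,b}(u)}$ then becomes $+\dfrac{2^{2^{\nu-1} m_\nu} - 1}{3\left(2^{m_\nu} - 1\right)}\left(1 - K_{m_1}(a)\right)$, so we recover exactly the right-hand side of~(\ref{eq:sumu}), the last (partial) sum over the $u \in U$ with $\tr[m_\nu]{m_0}{\omega u} = 0$ being carried over unchanged. That is the whole argument.

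Consequently there is no real obstacle in this lemma itself — it is bookkeeping. The only points worth a second look are the sign (the right-hand side of~(\ref{eq:sumfab}) is negative, so $1 - (\text{it})$ flips to a plus) and the fact that~(\ref{eq:sumfab}) was derived via the telescoping identity $\prod_{j=2}^{\nu}\left(2^{m_j}+1\right) = \dfrac{2^{2^{\nu-1} m_\nu} - 1}{2^{m_\nu} - 1}$; both are routine and already in place. The genuine difficulty has been deferred: it is the evaluation of the remaining sum $\sum_{u \in U,\ \tr[m_\nu]{m_0}{\omega u} = 0} \addch{f_{a,b}(u)}$ as a Gauss sum of the form announced in the introduction, which is what the later sections (and the conjectured Kloosterman-sum formula) will have to handle.
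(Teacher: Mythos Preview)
Your proposal is correct and matches the paper's own treatment: the paper states this lemma without proof, presenting it simply as the simplification of Equation~(\ref{eq:walshunit}) obtained by plugging in Equation~(\ref{eq:sumfab}), which is exactly the substitution you carry out.
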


\subsection{Descending to an odd degree extension}

To simplify further Equation~(\ref{eq:sumu}),
the sum over $u \in U$ can be split into smaller sums
according to the extension $\GF{m_i}$
(with $1 \leq i \leq \nu$) where $\tr[m_i]{m_0}{u \omega}$ becomes $0$,
giving the following expression.

\begin{proposition}
\label{prp:snu}
For $\nu > 1$, $a \in \GF{m_1}^*$ and $b \in \GF[4]{}^*$,
and $\omega \in \GF{m_0}^*$, denote by $\Snu$ the sum
\begin{align}
S_\nu(a, b, \omega) & = \sum_{\tr[m_{\nu-1}]{m_0}{u \omega} \neq 0, \tr[m_\nu]{m_0}{u \omega} = 0, b \mulch[m_0]{u_\nu} = 1} \addch{f_a(u_1)} \label{eq:gauss} \enspace.
\end{align}
The Walsh transform of $f_{a,b}$ at $\omega \neq 0$ is
\begin{align}
\Wa{f_{a,b}}(\omega)
& = 1 - \frac{2 \cdot 2^{\left(2^{\nu-1}-1\right)m_\nu} - 1}{3} \left(1 - K_{m_1}(a)\right) \nonumber \\
& \qquad - \frac{2 \cdot 2^{\left( 2^{\nu-1} - 1 \right) m_\nu} \left( 2^{m_\nu - 1} - 1 \right)}{3} \addch{f_a(w_1)} \nonumber \\
& \qquad + 2^{m_{\nu} + 1} \Snu \enspace . \label{eq:walshfull}
\end{align}
\end{proposition}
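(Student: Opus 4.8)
The plan is to start from Equation~(\ref{eq:sumu}) and analyse the residual sum
\[
\Sigma(\omega) = \sum_{u \in U,\ \tr[m_\nu]{m_0}{\omega u} = 0} \addch{f_{a,b}(u)}
\]
by stratifying the condition $\tr[m_\nu]{m_0}{\omega u} = 0$ according to the largest subfield in which the partial trace already vanishes. Concretely, for each $i$ with $1 \le i \le \nu$ one has the tower of partial traces $\tr[m_i]{m_0}{\omega u}$, and exactly one of the events ``$\tr[m_{i}]{m_0}{\omega u} = 0$ but $\tr[m_{i-1}]{m_0}{\omega u} \neq 0$'' occurs (together with the extreme case $\tr[m_0]{m_0}{\omega u} = \omega u \neq 0$, which never contributes since $\omega u$ ranges over nonzero elements, and the case where all partial traces down to $m_\nu$ vanish). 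So I would write
\[
\Sigma(\omega) = \sum_{i=1}^{\nu} \sum_{\substack{\tr[m_{i-1}]{m_0}{\omega u} \neq 0 \\ \tr[m_i]{m_0}{\omega u} = 0}} \addch{f_{a,b}(u)},
\]
identify the $i = \nu$ slice as the one where the cubic character condition $b\mulch[m_0]{u_\nu}$ intervenes (since $f_{a,b}(u) = f_a(u_1) + g_b(u_\nu)$ and $g_b$ depends only on $u_\nu$), and split that slice further by the value of $b\mulch[m_0]{u_\nu}$, exactly as in the proof of the odd-case proposition. This is what isolates $\Snu$ as defined in Equation~(\ref{eq:gauss}).

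Next I would evaluate the slices with $i < \nu$. For those, $g_b(u_\nu)$ is summed freely over a full coset structure of $U_\nu$ while $u_1$ is constrained, so the $u_\nu$-sum contributes the factor $\sum_{c \in \GF[4]{}^*}\addch{\tr{2}{bc}} = -1$ (as already used in the proof of Equation~(\ref{eq:sumfab})), and similarly the $U_j$ for $2 \le j \le \nu-1$ with $j \neq$ (the relevant index) are summed freely, each contributing $2^{m_j}+1$. The remaining $u_1$-sum is either $\sum_{u_1 \in U_1}\addch{f_a(u_1)} = 1 - K_{m_1}(a)$ (when the vanishing happens strictly above $m_1$) or the single term $\addch{f_a(w_1^{-1})}$ when $i = 1$, since $\tr[m_1]{m_0}{\omega u_1} = 0$ pins $u_1$ to the inverse of the $U_1$-component $w_1$ of $\omega$ (Hilbert 90 / the argument in the odd-case proof). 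Collecting these, the $i < \nu$ contributions telescope via the identity $(2^{2^j m_\nu}+1)(2^{2^j m_\nu}-1) = 2^{2^{j+1}m_\nu}-1$ into a geometric-series expression in $2^{m_\nu}$; I would carry out this bookkeeping carefully to produce the two closed-form terms
\[
-\frac{2 \cdot 2^{(2^{\nu-1}-1)m_\nu} - 1}{3}\bigl(1 - K_{m_1}(a)\bigr)
\quad\text{and}\quad
-\frac{2 \cdot 2^{(2^{\nu-1}-1)m_\nu}\bigl(2^{m_\nu-1}-1\bigr)}{3}\addch{f_a(w_1)}.
\]
One subtlety to watch is the role of $w_1^{-1}$ versus $w_1$: since $f_a(u_1) = \tr{m_0}{a u_1^{2^{m_1}-1}} = \tr{m_0}{a u_1^{-2}}$ on $U_1$, and $w_1^{-1}$ has the same image under $u \mapsto u^{-2}$ as... one must check the exact relation, but $f_a(w_1^{-1}) = f_a(w_1)$ follows because inverting on $U_1$ commutes appropriately; I would verify this identity explicitly so the final formula matches Equation~(\ref{eq:walshfull}).

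Finally, substituting $\Sigma(\omega) = (\text{closed-form terms})/2^{m_\nu} + 2\,\Snu$ back into Equation~(\ref{eq:sumu}) and simplifying the $1 + \tfrac{2^{2^{\nu-1}m_\nu}-1}{3(2^{m_\nu}-1)}(1-K_{m_1}(a)) + 2^{m_\nu}\Sigma(\omega)$ combination yields Equation~(\ref{eq:walshfull}); the constant terms in $(1 - K_{m_1}(a))$ must be checked to combine correctly, using $\tfrac{2^{2^{\nu-1}m_\nu}-1}{2^{m_\nu}-1} = \sum_{k=0}^{2^{\nu-1}-1} 2^{k m_\nu}$ against the telescoped sum from the slices. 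I expect the main obstacle to be precisely this combinatorial bookkeeping: keeping track of which $U_j$'s are summed freely versus constrained in each of the $\nu-1$ intermediate slices, ensuring the powers of $2$ assemble into the stated geometric expressions, and correctly handling the boundary slices $i = 1$ (single-point constraint on $u_1$) and $i = \nu$ (cubic-character split producing $\Snu$). The character-sum manipulations themselves are routine given the identities collected in Sections~\ref{sec:ht90}--\ref{sec:cubic}; it is the indexing of the polar tower that requires care.
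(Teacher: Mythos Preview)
Your approach is exactly the paper's: stratify $\Sigma(\omega)$ into slices $\sigma_i$ according to the first level $i$ at which $\tr[m_i]{m_0}{\omega u}$ vanishes, extract $\Snu$ from the $i=\nu$ slice via the cubic-character split, and evaluate the other slices in closed form. Two points in your description need correcting, however, and they are precisely where the extra $\addch{f_a(w_1)}$ contributions (beyond the $i=1$ slice) come from.

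First, in any slice $i\ge 2$ the condition $\tr[m_{i-1}]{m_0}{\omega u}\neq 0$ forces all higher partial traces to be nonzero as well, so in particular $u_1\neq w_1^{-1}$; hence the $u_1$-sum is $\sum_{u_1\neq w_1^{-1}}\addch{f_a(u_1)} = 1 - K_{m_1}(a) - \addch{f_a(w_1^{-1})}$, not the full $1-K_{m_1}(a)$. Second, for the same reason the coordinates $u_j$ with $2\le j\le i-1$ are \emph{not} free: each must avoid the unique value that would kill the trace at level $j$, so they contribute a factor $2^{m_j}$ rather than $2^{m_j}+1$. Thus the intermediate slice has the shape
\[
\sigma_i \;=\; -\,\prod_{j=2}^{i-1} 2^{m_j}\;\prod_{j=i+1}^{\nu-1}(2^{m_j}+1)\;\frac{2^{m_\nu}+1}{3}\,\bigl(1-\addch{f_a(w_1^{-1})}-K_{m_1}(a)\bigr),
\]
and the paper sums $\sum_{i=2}^{\nu-1}\sigma_i$ via an induction identity
\[
\sum_{i=2}^{k-1}\prod_{j=2}^{i-1}2^{2^{k-j}m}\prod_{j=i+1}^{k}\bigl(2^{2^{k-j}m}+1\bigr)\;=\;\frac{2^{2(2^{k-2}-1)m}-1}{2^{m}-1},
\]
which is what plays the role of your ``telescoping'' (it is not a pure geometric series because of the mixed $2^{m_j}$ versus $2^{m_j}+1$ factors). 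With these two fixes your bookkeeping matches the paper's line for line, and your observation $f_a(w_1^{-1})=f_a(w_1)$ is correct since $f_a(u_1)=\tr{m_1}{a(u_1^2+u_1^{-2})}$ for $u_1\in U_1$.
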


\begin{proof}
The sum over $U$
can be divided into subsums $\sigma_i$ over $U_i$:
$\sum_{u \in U, \tr[m_\nu]{m_0}{\omega u} = 0} \addch{f_{a,b}(u)} = \sum_{i = 1}^\nu \sigma_i$ with
\begin{align*}
\sigma_i
& =  \sum_{\substack{\tr[m_{i-1}]{m_0}{u_1 \cdots u_{i-1} w_1 \cdots w_{i-1}} \neq 0, \\ \tr[m_i]{m_0}{u_1 \cdots u_i w_1 \cdots w_i} = 0, \\u_{i+1} \in U_{i+1}, \ldots, u_\nu \in U_\nu}}
\hspace{-4em} \addch{f_a(u_1)} \addch{g_b(u_\nu)}
\enspace .
\end{align*}

The first sum $\sigma_1$
can be simplified as Equation~(\ref{eq:sumfab}):
\begin{align}
\sigma_1
& = \prod_{j=2}^{\nu - 1} \left( 2^{m_j} + 1 \right) \addch{f_a(w_1^{-1})} \sum_{u_\nu \in U_\nu} \addch{g_b(u_\nu)}
\nonumber \\
& = - \prod_{j=2}^{\nu - 1} \left( 2^{m_j} + 1 \right) \frac{2^{m_\nu} + 1}{3} \addch{f_a(w_1^{-1})}
\nonumber \\
& = - \frac{2^{2^{\nu-1} m_\nu} - 1}{3\left(2^{m_\nu} - 1\right)} \addch{f_a(w_1^{-1})}
\enspace . \label{eq:sumfirst}
\end{align}

The last sum $\sigma_\nu$ can be split according to the value of
$\mulch[m_0]{u_\nu}$ as in Section~\ref{sec:odd}:
\begin{align}
\sigma_\nu
& = 2 \sum_{\substack{\tr[m_{\nu-1}]{m_0}{u \omega} \neq 0, \\ \tr[m_\nu]{m_0}{u \omega} = 0,\\ b \mulch[m_0]{u_\nu} = 1}} \addch{f_a(u_1)}
 - \sum_{\substack{\tr[m_{\nu-1}]{m_0}{u \omega} \neq 0, \\ \tr[m_\nu]{m_0}{u \omega} = 0}} \addch{f_a(u_1)} \enspace , \label{eq:sumlast}
\end{align}
where the first term is $2 S_\nu(a, b, \omega)$
and the second term is
\begin{align}
- \sum_{\substack{\tr[m_{\nu-1}]{m_0}{u \omega} \neq 0, \\ \tr[m_\nu]{m_0}{u \omega} = 0 }} \addch{f_a(u_1)}
& =
- \prod_{j=2}^{\nu-1} 2^{m_j} \sum_{u_1 \neq w_1^{-1}} \addch{f_a(u_1)}
\nonumber \\
& = - \prod_{j=2}^{\nu-1} 2^{m_j} \left( 1 - \addch{f_a(w_1^{-1})} - K_{m_1}(a) \right)
\nonumber \\
& = - 2^{2 \left(2^{\nu - 2} - 1\right) m_\nu} \left( 1 - \addch{f_a(w_1^{-1})} - K_{m_1}(a) \right)
\enspace ,
\label{eq:sumeasy}
\end{align}
as the product of the $2^{m_j}$'s is
\begin{align}
\prod_{j=2}^{\nu-1} 2^{m_j}
& = \prod_{j=2}^{\nu-1} 2^{2^{\nu-j} m_\nu}
= 2^{2^{\nu-2} \sum_{j=0}^{\nu-3} 2^{-j} m_\nu}
= 2^{2^{\nu-2} 2 \left( 1-2^{-\nu+2} \right) m_\nu}
\enspace .
\label{eq:prodpows}
\end{align}

For $\nu > 2$, the intermediate sums $\sigma_i$ for $2 < i < \nu$ are:
\begin{align*}
\sigma_i
& = \prod_{j=2}^{i-1} 2^{m_j} \prod_{j=i+1}^{\nu-1} \left( 2^{m_j}+1 \right) \sum_{u_1 \neq w_1^{-1}} \addch{f_a(u_1)} \sum_{u_\nu \in U_\nu} \addch{g_b(u_\nu)}
\\
& = - \prod_{j=2}^{i-1} 2^{m_j} \prod_{j=i+1}^{\nu-1} \left( 2^{m_j}+1 \right) \frac{2^{m_\nu}+1}{3} \left( 1 - \addch{f_a(w_1^{-1})} - K_{m_1}(a) \right)
\enspace .
\end{align*}

Fortunately, a simpler expression for the sum of
of the products of $2^{m_j}$'s and $(2^{m_j}+1)$'s
for $1 < j < \nu$ can be devised.
Indeed, for $k \geq 3$ and any rational number $m$,
the sum that we denote by $\Sigma(m, k)$ is
\begin{align}
\Sigma(m,k)
=
\sum_{i=2}^{k-1}
\left(
\prod_{j=2}^{i-1} 2^{2^{k-j} m}
\prod_{j=i+1}^{k} \left( 2^{2^{k-j} m}+1 \right)
\right)
& =
\frac{2^{2 \left( 2^{k-2} - 1 \right) m}-1}{2^{m} - 1}
\enspace .
\label{eq:sumprodpows}
\end{align}
The proof goes by induction on $k$.
For $k = 3$, the identity states
$2^{m}+1 = \frac{2^{2 m}-1}{2^{m}-1}$.
Let us now suppose that Equation (\ref{eq:sumprodpows}) is
verified up to some $k \geq 3$ for all rational numbers $m$'s.
The sum for $k+1$ is
\begin{align*}
\Sigma(m, k + 1)
&
=
\left( 2^m + 1 \right)
\Sigma(2 m, k)
+
\left( 2^{m} + 1 \right)
\prod_{j=2}^{k-1} 2^{2^{k-j} (2 m)}
\end{align*}
By induction and a variation of Equation (\ref{eq:prodpows}),
the identity is proved for $k+1$:
\begin{align*}
\Sigma(m, k + 1)
&
=
\left( 2^m + 1 \right)
\frac{2^{2 \left( 2^{k-2} - 1 \right) (2 m)}-1}{2^{2 m} - 1}
+
\left( 2^m + 1 \right)
2^{4 \left(2^{k-2}-1\right) m}
\\
&
=
\frac{2^{4 \left( 2^{k-2} - 1 \right) m}-1}{2^{m} - 1}
+
\frac{
\left( 2^{2 m} - 1 \right)
2^{4 \left(2^{k-2}-1\right) m}
}
{2^m - 1}
\\
& =
\frac{2^{2 \left( 2^{k-1} - 1 \right) m}-1}{2^{m} - 1}
\enspace .
\end{align*}

Setting $k = \nu$ and $m = m_\nu$ in Equation (\ref{eq:sumprodpows}) yields
\begin{align}
\sum_{i=2}^{\nu-1} \sigma_i
& =
- \frac{2^{2 \left( 2^{\nu-2} - 1 \right) m_{\nu}}-1}{3 \left( 2^{m_{\nu}}-1 \right)} \left( 1 - \addch{f_a(w_1^{-1})} - K_{m_1}(a) \right)
\enspace .
\label{eq:summiddle}
\end{align}

Note that for $\nu = 2$, both sides of the above equality
are zero.
Therefore, for any $\nu > 1$,
Equations~(\ref{eq:sumfirst}), (\ref{eq:sumlast}), (\ref{eq:sumeasy})
and  (\ref{eq:summiddle}),
lead to the following expression for the Walsh transform at $\omega \neq 0$:
\begin{align*}
\Wa{f_{a,b}}(\omega)
& = 1 + \frac{2^{2^{\nu-1} m_\nu} - 1}{3\left(2^{m_\nu} - 1\right)} \left( 1 - K_{m_1}(a) \right) \nonumber \\
& \qquad - 2^{m_\nu} \frac{2^{2^{\nu-1} m_\nu} - 1}{3\left(2^{m_\nu} - 1\right)} \addch{f_a(w_1^{-1})} \nonumber \\
& \qquad - 2^{m_\nu} \frac{2^{2 \left( 2^{\nu-2} - 1 \right) m_{\nu}}-1}{3 \left( 2^{m_{\nu}}-1 \right)} \left( 1 - \addch{f_a(w_1^{-1})} - K_{m_1}(a) \right) \nonumber \\
& \qquad - 2^{m_\nu} 2^{2 \left(2^{\nu - 2} - 1\right) m_\nu} \left( 1 - \addch{f_a(w_1^{-1})} - K_{m_1}(a) \right) \nonumber \\
& \qquad + 2^{m_\nu + 1} S_\nu(a, b, \omega)
\enspace ,
\end{align*}
which gives the announced expression by gathering independently
the terms in $\addch{f_a(w_1^{-1})}$ and $(1 - K_{m_1}(a))$.
\end{proof}

Unfortunately, making the remaining sum $S_\nu(a, b, \omega)$ explicit
is a hard problem.
Doing so is equivalent to evaluating a Gauss sum as in Equation~(\ref{eq:gengauss}):
an exponential sum involving a multiplicative character and an additive character.
In the next section, we manage to tackle the case $\nu = 2$
when $\omega \in \GF{m_1}^*$ (that is $w_1 = 1$)
and conjecture a partial formula when $\omega \not\in \GF{m_1}^*$.

\subsection{Four times an odd number}

From now on, it is supposed that $\nu = 2$, \ie $m_0$ is four times the odd number $m_2$.

For $\tr[m_2]{m_0}{u \omega}$ to be zero with $u_1 \neq w_1^{-1}$,
$u_2$ must be the polar part of
$\left( \omega_2 \tr[m_1]{m_0}{u_1 \omega_1} \right)^{-1}$
so that the sum of Equation~(\ref{eq:gauss}) becomes
\begin{align}
\label{eq:fourgauss}
\Snu[2]
& = \sum_{u_1 \neq w_1^{-1}, \mulch[m_0]{w_2 \tr[m_1]{m_0}{u_1 w_1}} = b} \addch{\tr{m_0}{a u_1^{-2}}} \enspace .
\end{align}

\subsubsection{The subfield case}
We now restrict to the case $w_1 = 1$, that is $\omega \in \GF{m_1}^*$ rather than $\omega \in \GF{m_0}^*$.

\begin{lemma}
For $a \in \GF{m_1}^*$ and $b \in \GF[4]{}^*$,
and $\omega \in \GF{m_1}^*$,
define $\gamma \in \GF[4]{}^*$ by $\gamma = b \mulch[m_1]{w_2}$.
Then
\begin{align}
\Snu[2]
& = 2 \sum_{t \in \T_{m_1}^1, \mulch[m_1]{t} = \gamma} \addch{\tr{m_1}{a t}}
\enspace .
\label{eq:fourgaussone}
\end{align}
\end{lemma}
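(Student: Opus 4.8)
The plan is to start from the expression~(\ref{eq:fourgauss}) for $\Snu[2]$ specialised to $w_1 = 1$, namely
\begin{align*}
\Snu[2] & = \sum_{\substack{u_1 \in U_1 \setminus \set{1} \\ \mulch[m_0]{w_2 \tr[m_1]{m_0}{u_1}} = b}} \addch{\tr{m_0}{a u_1^{-2}}} \enspace ,
\end{align*}
and to descend it from $U_1$ to $\GF{m_1}$ by Hilbert's Theorem~90. First I would note that for $u_1 \in U_1$ one has $u_1^{2^{m_1}} = u_1^{-1}$, so $t := \tr[m_1]{m_0}{u_1} = u_1 + u_1^{-1}$ lies in $\GF{m_1}$, and that both the summand and the condition depend on $u_1$ only through $t$: transitivity of the trace and $a \in \GF{m_1}^*$ give $\tr{m_0}{a u_1^{-2}} = \tr{m_1}{a \tr[m_1]{m_0}{u_1^{-2}}} = \tr{m_1}{a (u_1 + u_1^{-1})^2} = \tr{m_1}{a t^2}$, while the condition reads $\mulch[m_0]{w_2 t} = b$. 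Since the map $x \mapsto x + x^{-1}$ of Section~\ref{sec:ht90} is $2$-to-$1$ from $U_1 \setminus \set{1}$ onto $\T_{m_1}^1$ (and $u_1 = 1$ is excluded, as $0 \notin \T_{m_1}^1$), this yields
\begin{align*}
\Snu[2] & = 2 \sum_{\substack{t \in \T_{m_1}^1 \\ \mulch[m_0]{w_2 t} = b}} \addch{\tr{m_1}{a t^2}} \enspace .
\end{align*}

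Next I would recast the cubic condition as one on $\mulch[m_1]{t}$. As $m_1 = 2 m_2$ with $m_2$ odd, $w_2 \in U_2 \subseteq \GF{m_1}^*$, hence $w_2 t \in \GF{m_1}$, and the subfield relation of Section~\ref{sec:characters} gives $\mulch[m_0]{w_2 t} = \mulch[m_1]{w_2 t}^2$. Squaring being a permutation of $\GF[4]{}^*$ with $x^4 = x$, the condition $\mulch[m_1]{w_2 t}^2 = b$ is equivalent to $\mulch[m_1]{w_2 t} = b^2$; using multiplicativity of $\mulch[m_1]{}$ and $\gamma = b \mulch[m_1]{w_2}$ this becomes $\mulch[m_1]{t} = b^2 \mulch[m_1]{w_2}^{-1} = \gamma^2$, so that
\begin{align*}
\Snu[2] & = 2 \sum_{\substack{t \in \T_{m_1}^1 \\ \mulch[m_1]{t} = \gamma^2}} \addch{\tr{m_1}{a t^2}} \enspace .
\end{align*}

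Finally I would perform the substitution $s = t^2$. Squaring is a bijection of $\GF{m_1}$ preserving $\T_{m_1}^1$ (because $\tr{m_1}{(t^2)^{-1}} = \tr{m_1}{(t^{-1})^2} = \tr{m_1}{t^{-1}}$) and satisfying $\mulch[m_1]{t^2} = \mulch[m_1]{t}^2$; it therefore maps $\set{t \in \T_{m_1}^1 : \mulch[m_1]{t} = \gamma^2}$ bijectively onto $\set{s \in \T_{m_1}^1 : \mulch[m_1]{s} = \gamma^4 = \gamma}$, while the summand at $s = t^2$ is $\addch{\tr{m_1}{a s}}$; this is exactly~(\ref{eq:fourgaussone}). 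I expect the only real difficulty to be the bookkeeping of the cubic character: one must keep $\mulch[m_0]{}$ and $\mulch[m_1]{}$ apart, push $\mulch[m_0]{}$ into $\mulch[m_1]{}$ via the subfield relation, and invert squaring in $\GF[4]{}^*$ at each stage, so that the powers of $\gamma$ end up matching the additive part after the closing substitution $s = t^2$.
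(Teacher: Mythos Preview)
Your proof is correct and follows essentially the same route as the paper's: both descend from $U_1$ to $\T_{m_1}^1$ via the $2$-to-$1$ map $u_1 \mapsto u_1 + u_1^{-1}$ and absorb a squaring to align the additive and multiplicative conditions. The only cosmetic difference is the order of operations---the paper applies the substitution $u_1 \mapsto u_1^2$ on $U_1$ before descending (so the condition reads $\mulch[m_1]{u_1^2+u_1^{-2}}=\gamma$ directly), whereas you descend first and then square on $\T_{m_1}^1$; your more explicit tracking of $\mulch[m_0]{}$ versus $\mulch[m_1]{}$ is in fact clearer than the paper's.
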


\begin{proof}
As $w_1 = 1$, both the multiplicative and additive characters act
on the the same inputs so that
we can use the function $u_1 \mapsto u_1 + u_1^{-1}$
to transform the sum over $U_1$ of Equation~(\ref{eq:fourgauss})
into a sum over $\T_{m_1}^1$:
\begin{align*}
\Snu[2]
& =\sum_{u_1 \neq 1, \mulch[m_1]{u_1^2 + u_1^{-2}} = b \mulch[m_1]{w_2}} \addch{\tr{m_1}{a \left( u_1^{-2}+ u_1^{2} \right)}}
\\
& = \sum_{u_1 \neq 1, \mulch[m_1]{u_1 + u_1^{-1}} = b \mulch[m_1]{w_2}} \addch{\tr{m_1}{a \left( u_1 + u_1^{-1} \right)}}
\\
& = 2 \sum_{t \in \T_{m_1}^1, \mulch[m_1]{t} = b \mulch[m_1]{w_2}} \addch{\tr{m_1}{a t}}
\enspace .
\qedhere
\end{align*}
\end{proof}

Remark that the sum in Equation~(\ref{eq:fourgaussone}) can be seen
as a first step toward generalizing the sum computed in Section~\ref{sec:odd}
in the odd case:
rather than involving $u_1$ directly, it involves
its trace $t = \tr[m_1]{m_0}{u_1}$.

As is customary, the sum over $\T_{m_1}^1$ can be evaluated using sums
over all of $\GF{m_1}$:
\begin{align}
\Snu[2]
& = \sum_{x \in \GF{m_1}^*, \mulch[m_1]{x} = \gamma} \addch{\tr{m_1}{a x}}
- \sum_{x \in \GF{m_1}^*, \mulch[m_1]{x} = \gamma} \addch{\tr{m_1}{a x + x^{-1}}}
\enspace .
\end{align}
The first sum is easily seen to be a cubic sum
whereas the computation of the second sum is more involved.

\begin{proposition}
For $\nu = 2$, $a \in \GF{m_1}^*$ and $\gamma \in \GF[4]{}^*$.
Define $\alpha \in \GF[4]{}^*$ by $\alpha = \mulch[m_1]{a}$.
The following equality holds:
\begin{align}
\sum_{x \in \GF{m_1}^*, \mulch{x} = \gamma} \addch{\tr{m_1}{a x}}
& = \left\{
\begin{array}{ll}
\frac{2^{m_2+1} - 1}{3} & \text{if $\gamma = \alpha^{-1}$,} \\
\frac{-2^{m_2} - 1}{3} & \text{if $\gamma \neq \alpha^{-1}$.}
\end{array}
\right. \label{eq:twistedcubic}
\end{align}
\end{proposition}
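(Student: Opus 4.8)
The plan is to evaluate $\sum_{x \in \GF{m_1}^*,\ \mulch[m_1]{x} = \gamma} \addch{\tr{m_1}{a x}}$ by averaging over the cubic multiplicative character. First I would write the indicator of the condition $\mulch[m_1]{x} = \gamma$ as $\frac{1}{3}\sum_{k=0}^{2} \bar\gamma^k\, \mulch[m_1]{x}^k$, where $\mulch[m_1]{}$ is viewed as a cubic character with values in the group of cube roots of unity in $\GF[4]{}$ (identifying $\GF[4]{}^*$ with $\Z/3\Z$ additively as the paper does). This turns the restricted sum into $\frac{1}{3}\bigl(\sum_{x} \addch{\tr{m_1}{ax}} + \bar\gamma\sum_x \mulch[m_1]{x}\addch{\tr{m_1}{ax}} + \bar\gamma^2\sum_x \mulch[m_1]{x}^2\addch{\tr{m_1}{ax}}\bigr)$, where sums are over $\GF{m_1}^*$. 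The first inner sum is $-1$ (the full additive character sum over $\GF{m_1}$ is $0$, minus the $x=0$ term). The remaining two are Gauss sums attached to the cubic character and the additive character $x \mapsto \addch{\tr{m_1}{ax}}$.

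The key step is then to recognize those two Gauss sums as cubic sums in disguise. Since $3 \mid 2^{m_1}-1$ (because $\nu = 2$, so $m_1 = 2m_2$ is even and $3 \mid 2^{m_\nu}+1$ divides $2^{m_1}-1$), the map $x \mapsto x^3$ is exactly $3$-to-$1$ onto the cubes, and the cosets of cubes are the fibers of $\mulch[m_1]{}$. Writing the sum $\sum_{x} \mulch[m_1]{x}^j \addch{\tr{m_1}{ax}}$ by grouping $x$ according to $\mulch[m_1]{x}$ shows it equals a $\Z/3\Z$-linear combination of the three coset sums; inverting this relation expresses each coset sum via the cubic sums $C_{m_1}(c,0) = \sum_{x}\addch{\tr{m_1}{c x^3}}$ for appropriate $c$ (after the substitution $x \mapsto x^3$, $C_{m_1}(a,0)$ picks out $3$ copies of the sum over cubes shifted by a character value). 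Concretely, $\sum_{x\in\GF{m_1}}\addch{\tr{m_1}{a x^3}} = 1 + 3\sum_{x\in\GF{m_1}^*,\ \mulch[m_1]{x}=1}\addch{\tr{m_1}{a x}}$ and more generally replacing $a$ by $a$ times a fixed non-cube shuffles the three fibers, so the three restricted sums are determined by $C_{m_1}(a,0)$ together with the elementary relation that they sum (with the $x=0$ term) to $0$.

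From there I would invoke Carlitz's evaluation of $C_{m_1}(a,0)$ for even $m_1$, quoted in Section~\ref{sec:cubic}: $C_{m_1}(a,0) = (-1)^{m_2+1}2^{m_2+1}$ if $\mulch[m_1]{a}=1$ and $(-1)^{m_2}2^{m_2}$ otherwise. Tracking how the value $\gamma = \alpha^{-1}$ (with $\alpha = \mulch[m_1]{a}$) singles out one of the three fibers — it is the fiber on which $\mulch[m_1]{ax}=1$, i.e. $ax$ is a cube, which is where $C_{m_1}(a,0)$ concentrates — gives $\frac{1}{3}(-1 + C_{m_1}(a,0))$-type expressions, and substituting Carlitz's values yields $\frac{2^{m_2+1}-1}{3}$ in the coincident case and $\frac{-2^{m_2}-1}{3}$ in the other two cases. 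I expect the main obstacle to be purely bookkeeping: pinning down the sign $(-1)^{m_2}$ against the asserted formula (the claim has no parity-dependent sign, so a compensating sign must be hidden in how $m_2$'s parity interacts with $\GF[4]{}\subset\GF{m_1}$ forcing $m_2$ even — indeed $m_0 = 4m_2$ must contain $\GF[4]{}$, which it does automatically, but $b \in \GF[4]{}^*$ sitting inside $\GF{m_1}$ requires nothing extra; so one must double-check whether $C_{m_1}(a,0)$ is in fact always $+2^{m_2+1}$ or $-2^{m_2}$ here, perhaps via $m_2$ being forced odd by hypothesis) and making sure the three-fiber linear algebra over $\Z/3\Z$ is inverted with the correct character values. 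Everything else is routine manipulation of Gauss sums.
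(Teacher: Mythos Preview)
Your approach is correct and, once you strip away the character-orthogonality wrapper, it coincides with the paper's: the paper simply picks $c \in \GF{m_1}^*$ with $\mulch[m_1]{c} = \gamma$, substitutes $x \mapsto cx$, and reads off $\tfrac{1}{3}\bigl(C_{m_1}(ac,0) - 1\bigr)$ directly --- exactly your ``concretely'' observation, without passing through the three Gauss sums. Your sign worry dissolves once you recall that $\nu$ is defined as the $2$-adic valuation of $n = m_0$, so $\nu = 2$ forces $m_\nu = m_2$ to be odd; hence $(-1)^{m_2+1} = 1$ and $(-1)^{m_2} = -1$ in Carlitz's formula, and the stated values follow at once from $\mulch[m_1]{ac} = \alpha\gamma = 1 \iff \gamma = \alpha^{-1}$.
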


\begin{proof}
Let $c \in \GF[2]{m_1}^*$ be such that $\mulch{c} = \gamma$.
We make the change of variables $x = cx$ to transform
the sum into a cubic sum:
\begin{align*}
\sum_{x \in \GF{m_1}^*, \mulch{x} = \gamma} \addch{\tr{m_1}{a x}}
& = \sum_{x \in \GF{m_1}^*, \mulch{x} = \mulch{c}} \addch{\tr{m_1}{a x}} \\
& = \sum_{x \in \GF{m_1}^*, \mulch{x} = 1} \addch{\tr{m_1}{a c x}} \\
& = \frac{1}{3} \sum_{x \in \GF{m_1}^*} \addch{\tr{m_1}{a c x^3}} \\
& = \frac{1}{3} \left(C_{m_1}(a c, 0) - 1\right) \enspace .
\end{align*}
Carlitz's results~\cite{MR544577} give explicit values for this cubic sum
when $m_1$ is even and $m_2$ is odd.
\end{proof}

\begin{proposition}
For $\nu = 2$, $a \in \GF{m_1}^*$ and $\gamma \in \GF[4]{}^*$.
Define $\alpha \in \GF[4]{}^*$ by $\alpha = \mulch[m_1]{a}$.
The following equality holds:
\begin{align}
\sum_{x \in \GF{m_1}^*, \mulch{x} = \gamma} \addch{\tr{m_1}{a x + x^{-1}}}
& = \left\{
\begin{array}{ll}
\frac{1}{3} \left(2 C_{m_1}(a, a) + K_{m_1}(a) - 1\right) & \text{if $\gamma = \alpha$,} \\
\frac{1}{3} \left(- C_{m_1}(a, a) + K_{m_1}(a) - 1\right) & \text{if $\gamma \neq \alpha$.}
\end{array}
\right. \label{eq:twistedkloo}
\end{align}
\end{proposition}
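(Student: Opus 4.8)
The plan is to reproduce, in the present even-degree setting, the strategy used in the odd case (Section~\ref{sec:odd}): symmetrize the exponent by a change of variable, apply Hilbert's Theorem~90 to turn the sum into one over $\T_{m_1}^0$, split it according to cubic residuosity, and identify the pieces with a Kloosterman sum and the cubic sum $C_{m_1}(a,a)$.

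First I would symmetrize. Set $b = a^{2^{m_1 - 1}}$, so $b^2 = a$; since $m_1$ is even we have $2^{m_1 - 1} \equiv 2 \pmod 3$, hence $\mulch[m_1]{b} = \alpha^{2^{m_1 - 1}} = \alpha^{-1}$. The substitution $x = z b^{-1}$ (a permutation of $\GF{m_1}^*$) turns $a x + x^{-1}$ into $b(z + z^{-1})$ and the condition $\mulch{x} = \gamma$ into $\mulch{z} = \gamma \alpha^{-1}$, so the sum to evaluate becomes $\sum_{z \in \GF{m_1}^*,\ \mulch{z} = \gamma \alpha^{-1}} \addch{\tr{m_1}{b(z + z^{-1})}}$. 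Now $z \mapsto z + z^{-1}$ is $2$-to-$1$ from $\GF{m_1}^* \setminus \set{1}$ onto $\T_{m_1}^0 \setminus \set{0}$, the two preimages of a point $t$ being $z_0$ and $z_0^{-1}$, whose cubic characters $\mulch[m_1]{z_0}$ and $\mulch[m_1]{z_0}^{-1}$ are both $1$ when $z_0$ is a cube and both $\neq 1$ otherwise. Splitting the sum accordingly and isolating $z = 1$, I obtain the value $1 + 2 P$ when $\gamma = \alpha$ and $Q$ when $\gamma \neq \alpha$, where $P$ (resp. $Q$) denotes $\sum \addch{\tr{m_1}{b t}}$ over those $t \in \T_{m_1}^0 \setminus \set{0}$ whose preimages are cubes (resp. are not): when $\gamma \neq \alpha$ each non-cube pair contributes exactly one term, so the value is the same for both such $\gamma$. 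As the index sets of $P$ and $Q$ partition $\T_{m_1}^0 \setminus \set{0}$, the identity of Section~\ref{sec:kloo} applied to $b$ (together with $K_{m_1}(b) = K_{m_1}(b^2) = K_{m_1}(a)$) yields the first relation $P + Q = \frac{1}{2} K_{m_1}(a) - 1$.

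The bulk of the work is a second relation, obtained by expressing $C_{m_1}(a,a)$ through $P$. Since $C_{m_1}(a,a) = C_{m_1}(b,b)$ (substitute $x \mapsto x^2$ and use $\tr{m_1}{y^2} = \tr{m_1}{y}$), I would compute $C_{m_1}(b,b) = \sum_{x \in \GF{m_1}} \addch{\tr{m_1}{b D_3(x)}}$ by partitioning $\GF{m_1}$ into $\set{0}$, $\T_{m_1}^0 \setminus \set{0}$ and $\T_{m_1}^1$ and using $D_3(z + z^{-1}) = z^3 + z^{-3}$: the term $x = 0$ gives $1$; over $\T_{m_1}^0 \setminus \set{0}$ one writes $x = z + z^{-1}$ ($2$-to-$1$ over $\GF{m_1}^* \setminus \set{1}$), then collects the $3$-to-$1$ cube map $z \mapsto z^3$, whose unique short fibre is the one over $1$ (equal to $\GF[4]{}^*$); accounting for that short fibre and recognizing the resulting sum over nonidentity cubes as $2 P$, this part equals $1 + 3 P$; over $\T_{m_1}^1$ one writes $x = z + z^{-1}$ with $z \in U_1 \setminus \set{1}$ ($2$-to-$1$), uses that cubing permutes $U_1$ (as $\gcd(3, 2^{m_1} + 1) = 1$ for even $m_1$) and that $\tr{m_1}{b(w + w^{-1})} = \tr{m_0}{b w}$ for $w \in U_1$, and applies the Section~\ref{sec:kloo} identity to obtain $- \frac{1}{2} K_{m_1}(a)$. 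Adding up, $C_{m_1}(a,a) = 2 + 3 P - \frac{1}{2} K_{m_1}(a)$. I expect the delicate point to be precisely this bookkeeping: getting right the small exceptional contributions — the point $z = 1$, the short fibre of the cube map, and the term $x = 0$ — since these are exactly what makes the two asserted cases differ.

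Finally I would eliminate $P$ and $Q$ from the relations $P + Q = \frac{1}{2} K_{m_1}(a) - 1$ and $C_{m_1}(a,a) = 2 + 3 P - \frac{1}{2} K_{m_1}(a)$: a short computation gives $1 + 2 P = \frac{1}{3} \left( 2 C_{m_1}(a,a) + K_{m_1}(a) - 1 \right)$ and $Q = \frac{1}{3} \left( - C_{m_1}(a,a) + K_{m_1}(a) - 1 \right)$, which are exactly the two values claimed for $\gamma = \alpha$ and $\gamma \neq \alpha$ respectively.
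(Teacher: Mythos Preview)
Your proof is correct and uses essentially the same ingredients as the paper's: the square-root substitution $x = z/r$ (your $b$ is the paper's $r$), Hilbert's Theorem~90, the Dickson identity $D_3(z+z^{-1})=z^3+z^{-3}$, the permutation of cubes on $U_1$ (equivalently, $D_3$ permuting $\T_{m_1}^1$ for even $m_1$), and the Kloosterman identity of Section~\ref{sec:kloo}. The only difference is organizational: the paper computes the $\gamma=\alpha$ sum directly as a chain of equalities ending at $\tfrac{1}{3}(2C_{m_1}(a,a)+K_{m_1}(a)-1)$ and then deduces the other case from the total $K_{m_1}(a)-1$ and the symmetry $x\mapsto (ax)^{-1}$, whereas you introduce the auxiliary quantities $P,Q$, observe the same symmetry as the pairing $z\leftrightarrow z^{-1}$, and recover $C_{m_1}(a,a)$ in terms of $P$ by partitioning $\GF{m_1}$ --- the same computation run in the opposite direction.
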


\begin{proof}
First remark that summing over the three possible values of $\gamma$ yields
\begin{align*}
\sum_{x \in \GF{m_1}^*} \addch{\tr{m_1}{a x + x^{-1}}}
& = K_{m_1}(a) - 1 \enspace .
\end{align*}
Moreover, making the change of variable $x = \left(a x\right)^{-1}$
shows that the sum takes the same value for $\gamma$ and
$\alpha^{-1} \gamma^{-1}$.
In particular, it takes the same value for $\alpha \beta$ and $\alpha \beta^2$,
where $\beta \in \GF[4]{}^*$ is a primitive third root of unity,
that is for the elements of $\GF[4]{}^*$ different from $\alpha$,
and this value can be deduced from the value for $\gamma = \alpha$
which we now compute.

Denote by $r$ a square root of $a$.
The change of variable $x = rx$ and properties of the Dickson polynomial $D_3$
when $m_1$ is even show that for $\gamma = \alpha = \mulch[m_1]{r^{-1}}$:
\begin{align*}
\sum_{x \in \GF{m_1}^*, \mulch{x} = \alpha} \addch{\tr{m_1}{a x + x^{-1}}}
& = \sum_{x \in \GF{m_1}^*, \mulch{x} = 1} \addch{\tr{m_1}{r \left( x + x^{-1} \right)}}
\\
& = \frac{1}{3} \sum_{x \in \GF{m_1}^*} \addch{\tr{m_1}{r \left( x^3 + x^{-3} \right)}} \nonumber \\
& = \frac{1}{3} \sum_{x \in \GF{m_1}^*} \addch{\tr{m_1}{r D_3(x + x^{-1})}}
\\
\\
& = \frac{1}{3} \left(2 \sum_{t \in \T_{m_1}^0} \addch{\tr{m_1}{r D_3(t)}} - 1\right)
\\
\\
& = \frac{1}{3} \left(2 C_{m_1}(r, r) - 2 \sum_{t \in \T_{m_1}^1} \addch{\tr{m_1}{r t}} - 1\right)
\\
& = \frac{1}{3} \left(2 C_{m_1}(r, r) + K_{m_1}(r) - 1\right)
\\
& = \frac{1}{3} \left(2 C_{m_1}(a, a) + K_{m_1}(a) - 1\right)
\enspace .
\qedhere
\end{align*}
%
\end{proof}

Equations~(\ref{eq:twistedcubic}) and (\ref{eq:twistedkloo}) give
the following expression for $\Snu[2]$.
\begin{theorem}
For $\nu = 2$, $a \in \GF{m_1}^*$ and $b \in \GF[4]{}^*$,
and $\omega \in \GF{m_1}^*$,
let $\gamma = b \mulch[m_1]{w_2}$.
Then the sum $\Snu[2]$ is
\begin{align}
\Snu[2]
& = \left\{
\begin{array}{ll}
\frac{1}{3} \left(2^{m_2 + 1} - 2 C_{m_1}(a, a) - K_{m_1}(a)\right)
& \text{if $\gamma = \alpha$ and $\gamma = \alpha^{-1}$,}\\
\frac{1}{3} \left(-2^{m_2} - 2 C_{m_1}(a, a) - K_{m_1}(a)\right)
& \text{if $\gamma = \alpha$ and $\gamma \neq \alpha^{-1}$,}\\
\frac{1}{3} \left(2^{m_2 + 1} + C_{m_1}(a, a) - K_{m_1}(a)\right)
& \text{if $\gamma \neq \alpha$ and $\gamma = \alpha^{-1}$,}\\
\frac{1}{3} \left(-2^{m_2} + C_{m_1}(a, a) - K_{m_1}(a)\right)
& \text{if $\gamma \neq \alpha$ and $\gamma \neq \alpha^{-1}$.}
\end{array}
\right.
\end{align}
\end{theorem}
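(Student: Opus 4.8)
The plan is to combine the two preceding propositions. Recall that, just before Equation~(\ref{eq:twistedcubic}), the sum over $\T_{m_1}^1$ in Equation~(\ref{eq:fourgaussone}) was rewritten as a difference of two sums over $\GF{m_1}^*$ restricted by $\mulch[m_1]{x} = \gamma$, namely
\begin{align*}
\Snu[2]
& = \sum_{x \in \GF{m_1}^*, \mulch[m_1]{x} = \gamma} \addch{\tr{m_1}{a x}}
- \sum_{x \in \GF{m_1}^*, \mulch[m_1]{x} = \gamma} \addch{\tr{m_1}{a x + x^{-1}}}
\enspace .
\end{align*}
So the proof amounts to substituting Equation~(\ref{eq:twistedcubic}) for the first sum and Equation~(\ref{eq:twistedkloo}) for the second, and then subtracting.

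Since the value of the first sum depends only on whether $\gamma$ equals $\alpha^{-1}$, and the value of the second only on whether $\gamma$ equals $\alpha$, crossing these two independent dichotomies yields exactly the four cases in the statement. In each case the two closed forms are subtracted and the stray constant terms cancel, leaving the announced expression; this is a one-line verification per case. A minor point worth noting is the consistency of the case split: in the cyclic group $\GF[4]{}^*$ of order $3$, one can have $\gamma = \alpha$ and $\gamma = \alpha^{-1}$ simultaneously only when $\alpha = 1$ (that is, $a$ is a cube in $\GF{m_1}$), which forces $\gamma = 1$; in every other situation at most one of the two equalities holds, so each of the four cases is attained for suitable $a$, $b$ and $\omega$.

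There is essentially no obstacle here: the substantive computations have already been carried out in Equations~(\ref{eq:twistedcubic}) and~(\ref{eq:twistedkloo}), which in turn rest on Carlitz's evaluation of $C_{m_1}(a, 0)$ and on the Dickson-polynomial identity $D_3(x + x^{-1}) = x^3 + x^{-3}$ applied to the twisted Kloosterman sum. The theorem is merely the bookkeeping that packages those two results into a single formula for $\Snu[2]$, so the only care required is to track the two orthogonal conditions $\gamma = \alpha$ and $\gamma = \alpha^{-1}$ correctly.
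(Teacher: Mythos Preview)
Your proposal is correct and follows exactly the paper's approach: the paper simply states that Equations~(\ref{eq:twistedcubic}) and~(\ref{eq:twistedkloo}) give the announced expression for $\Snu[2]$, which is precisely the substitution-and-subtraction you describe. Your additional remark on when the four cases are non-vacuous is a helpful sanity check that the paper omits.
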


Carlitz's results~\cite{MR544577} recalled in Section~\ref{sec:cubic}
can be used to make the cubic sum $C_{m_1}(a, a)$ explicit.
In the particular case where $K_{m_1}(a) \equiv 1 \pmod{3}$,
which is equivalent to $C_{m_1}(a, a) = 0$ and implies that $a$ is a cube,
the expression for $\Snu[2]$ gets very concise,
as does Equation~(\ref{eq:walshfull}) for the Walsh transform.
\begin{corollary}
\label{crl:walshsubfield}
For $\nu = 2$, $a \in \GF{m_1}^*$ with $K_{m_1}(a) \equiv 1 \pmod{3}$
and $b \in \GF[4]{}^*$, and $\omega \in \GF{m_1}^*$,
let $\gamma = b \mulch[m_1]{w_2}$.
Then the sum $\Snu[2]$ is
\begin{align}
\Snu[2]
& = \frac{2^{m_2+1} - K_{m_1}(a)}{3} - \tr{2}{\gamma} 2^{m_2}
\enspace .
\end{align}
and the Walsh transform at $\omega \neq 0$ is
\begin{align}
\Wa{f_{a,b}}(\omega)
& = \addch{\tr{2}{\gamma}} 2^{m_1} + \frac{4 - K_{m_1}(a)}{3} \enspace .
\end{align}
\end{corollary}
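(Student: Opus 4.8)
The plan is to obtain both formulae by specializing the results already established in this subsection to the hypothesis $K_{m_1}(a) \equiv 1 \pmod 3$. Recall from Section~\ref{sec:cubic} that this congruence is equivalent to $C_{m_1}(a, a) = 0$ and forces $a$ to be a cube, so that $\alpha = \mulch[m_1]{a} = 1$ and hence $\alpha = \alpha^{-1} = 1$. Consequently, in the four-case expression for $\Snu[2]$ obtained in the preceding theorem only the first and the last case can occur, distinguished by whether $\gamma = 1$; substituting $C_{m_1}(a, a) = 0$ they reduce to $\frac{1}{3}\bigl(2^{m_2+1} - K_{m_1}(a)\bigr)$ and $\frac{1}{3}\bigl(-2^{m_2} - K_{m_1}(a)\bigr)$ respectively. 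Since the character table of $\GF[4]{}$ shows that, for $\gamma \in \GF[4]{}^*$, one has $\tr{2}{\gamma} = 0$ exactly when $\gamma = 1$, and since the two values above differ by $2^{m_2}$, they are captured by the single expression $\frac{2^{m_2+1} - K_{m_1}(a)}{3} - \tr{2}{\gamma}\, 2^{m_2}$, which is the first claim.

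For the Walsh transform I would substitute this expression into Equation~(\ref{eq:walshfull}) of Proposition~\ref{prp:snu}, taking $\nu = 2$ and $m_\nu = m_2$ so that $2^{(2^{\nu-1}-1)m_\nu} = 2^{m_2}$. Because we are in the subfield case $\omega \in \GF{m_1}^*$ we have $w_1 = 1$, and since $a \in \GF{m_1}$ its absolute trace vanishes ($\tr[m_1]{m_0}{a} = a + a = 0$, hence $\tr{m_0}{a} = 0$), so $\addch{f_a(w_1)} = \addch{f_a(1)} = \addch{\tr{m_0}{a}} = 1$. After these substitutions it only remains to expand Equation~(\ref{eq:walshfull}) and collect terms over the common denominator $3$: the contributions involving powers of $2$ in $m_2$ alone cancel, leaving $\frac{4 - K_{m_1}(a)}{3} + 2^{m_1} - 2 \cdot 2^{m_1}\,\tr{2}{\gamma}$, which equals $\addch{\tr{2}{\gamma}}\, 2^{m_1} + \frac{4 - K_{m_1}(a)}{3}$ because $\addch{\tr{2}{\gamma}} = 1 - 2\,\tr{2}{\gamma}$ for $\tr{2}{\gamma} \in \set{0, 1}$.

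The argument is essentially bookkeeping; the only genuine point is the observation that the cube hypothesis forces $\alpha = 1$, which is exactly what allows the four-case formula of the preceding theorem to be rewritten in terms of the single quantity $\tr{2}{\gamma}$. The step requiring the most care will be the final simplification of Equation~(\ref{eq:walshfull}): one must verify that the $2^{m_2}$-, $2^{m_2+1}$- and $2^{2 m_2}$-contributions of its three explicit terms, together with that of $2^{m_\nu + 1}\Snu[2]$, combine so as to leave precisely $\pm 2^{m_1}$ plus the constant $\frac{4 - K_{m_1}(a)}{3}$, with no residual power of $2$.
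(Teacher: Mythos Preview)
Your proposal is correct and follows exactly the approach the paper intends: the corollary is obtained by specializing the preceding four-case theorem under $C_{m_1}(a,a)=0$ and $\alpha=1$, then feeding the resulting expression for $\Snu[2]$ into Equation~(\ref{eq:walshfull}) with $\nu=2$ and $w_1=1$. Your verification that the powers of $2$ combine to $2^{m_1}(1-2\tr{2}{\gamma})=\addch{\tr{2}{\gamma}}2^{m_1}$ is the only computation the paper leaves implicit, and it checks out.
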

Note that Corollary~\ref{crl:walshsubfield} shows that
for $\omega \in \GF{m_1}^*$ the Walsh transform of $f_{a,b}$
at $\omega$
is that of a bent function if and only if $K_{m_1}(a) = 4$.

\subsection{A conjectural general formula}

The techniques used in the previous section do not apply to the general case
where $w_1 \neq 1$, \ie $\omega \in \GF{m_0}^*$.
The main reason being that the multiplicative and additive characters
of $\GF{m_1}$ act on different values, \eg
$r = \tr[m_1]{m_0}{w_1 u_1}$, or $s = \tr[m_1]{m_0}{w_1^{-1} u_1}$,
for one of them,
and $t = \tr[m_1]{m_0}{u_1}$
for the other one.
Considering $v = \tr[m_1]{m_0}{w_1}$, these values are related by $r + s = v t$.
Moreover, the sum $\Snu[2]$ takes the same value for $w_1$ and $w_1^{-1}$,
so there is hope to introduce enough symmetry to reduce the case $w_1 \neq 1$
to the case $w_1 = 1$.
Unfortunately, we could not devise a way to do so.

Yet, experimental evidence presented in more details in
Section~\ref{sec:expdata} suggests that the following conjecture,
which relates the value of $\Snu[2]$ for $w_1 = 1$ and $w_1 \neq 1$,
is true.
\begin{conjecture}
\label{cnj:walshconj}
For $\nu = 2$, $a \in \GF{m_1}^*$ with $K_{m_1}(a) \equiv 1 \pmod{3}$
and $b \in \GF[4]{}^*$, and $\omega \in \GF{m_0}^*$,
let $\gamma = b \mulch[m_1]{w_2}$.
There exists a Boolean function $\mystery$ such that
the sum $\Snu[2]$ is
\begin{align}
\Snu[2]
& = \frac{2^{m_2+1} - K_{m_1}(a)}{3}
- 2 f_a(w_1^{-1}) \frac{2^{m_2+1} - 1}{3}
- \mystery \addch{f_a(w_1^{-1})} 2^{m_2}
\enspace .
\end{align}
The Walsh transform at $\omega \neq 0$ is then
\begin{align}
\Wa{f_{a,b}}(\omega)
& = \addch{\mystery f_a(w_1^{-1})} 2^{m_1} + \frac{4 - K_{m_1}(a)}{3} \enspace .
\end{align}
\end{conjecture}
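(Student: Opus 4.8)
The plan is to reduce the general case $w_1 \neq 1$ to the subfield case $w_1 = 1$ already settled in Corollary~\ref{crl:walshsubfield}, by isolating the dependence of $\Snu[2]$ on $w_1$. Starting from Equation~(\ref{eq:fourgauss}), I would parametrize $U_1$ by $t = \tr[m_1]{m_0}{u_1}$, $s = \tr[m_1]{m_0}{w_1^{-1} u_1}$, and $r = \tr[m_1]{m_0}{w_1 u_1}$, which satisfy the linear relation $r + s = v\, t$ with $v = \tr[m_1]{m_0}{w_1}$ (using Hilbert~90 from Section~\ref{sec:ht90}, noting $u_1 + u_1^{-1}$ is $2$-to-$1$ onto $\T_{m_1}^1$). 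The additive character sees $\tr{m_0}{a u_1^{-2}} = \tr{m_1}{a(u_1^2 + u_1^{-2})} = \tr{m_1}{a\,D_3(t)}$ after halving via the Dickson identity $D_3(x+x^{-1}) = x^3 + x^{-3}$, while the multiplicative constraint $\mulch[m_0]{w_2 \tr[m_1]{m_0}{u_1 w_1}} = b$ becomes $\mulch[m_1]{r} = \gamma$ (up to the correcting power of $2$ coming from $w_2 \in \GF{m_2}$, absorbed into the definition of $\gamma$). So $\Snu[2]$ is, up to the factor $2$ from the $2$-to-$1$ map, a sum over $t \in \T_{m_1}^1$ of $\addch{\tr{m_1}{a\,D_3(t)}}$ restricted to those $t$ for which the companion value $r$ has prescribed cubic character.

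The key step is then to exploit the $w_1 \leftrightarrow w_1^{-1}$ symmetry: the change of variable $u_1 \mapsto u_1^{-1}$ swaps $r$ and $s$ and fixes $t$, so $\Snu[2]$ is simultaneously expressible with the constraint on $r$ and with the constraint on $s$. Averaging the two expressions and splitting according to the joint cubic characters $(\mulch[m_1]{r}, \mulch[m_1]{s})$ — which, because $r+s = vt$, are not independent but pinned together by the cubic character of $vt$ — should collapse the double sum into a combination of (i) the full sum $2\sum_{t\in\T_{m_1}^1}\addch{\tr{m_1}{a\,D_3(t)}}$, which by the computations behind Equation~(\ref{eq:twistedkloo}) and the hypothesis $K_{m_1}(a)\equiv 1\pmod 3$ (so $C_{m_1}(a,a)=0$) is explicit, and (ii) a correction term that depends on $w_1$ only through $v = \tr[m_1]{m_0}{w_1}$, i.e.\ through whether $v = 0$ (equivalently $\tr{m_0}{a w_1^{-2}} = f_a(w_1^{-1})$ vanishes, since $f_a(w_1^{-1}) = \tr{m_1}{a(w_1^2+w_1^{-2})} = \tr{m_1}{a\,D_3(v)}$) and through the cubic character of $v$. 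Matching the resulting case split against $\gamma = \alpha$, $\gamma = \alpha^{-1}$ recovers the subfield formula when $f_a(w_1^{-1}) = 0$, and when $f_a(w_1^{-1}) = 1$ the residual $2^{m_2}$-scale term defines the asserted Boolean function $\mystery$; the formula for $\Wa{f_{a,b}}(\omega)$ then follows by substituting into Equation~(\ref{eq:walshfull}) with $\nu = 2$ and simplifying, exactly as in the derivation of Corollary~\ref{crl:walshsubfield}.

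The main obstacle — indeed the reason this is only conjectured — is precisely the collapse in the key step: the constraint $r + s = vt$ couples the cubic characters of $r$, $s$, and $t$ in a way that does not factor, so the inner sum over $t\in\T_{m_1}^1$ at fixed character data is itself a twisted cubic/Kloosterman-type sum whose value is not supplied by Carlitz's results. Making it explicit seems to require either a new evaluation of sums of the form $\sum_{t} \addch{\tr{m_1}{a\,D_3(t)}}$ with a cubic-character restriction on an affine image $r = r(t)$ of $t$, or a symmetry argument strong enough to show the $w_1$-dependent part is forced into the two-valued shape $\mystery\cdot 2^{m_2}$ with $\mystery$ Boolean. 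I would attempt the latter via a careful involution/counting argument on the fibers of $u_1 \mapsto (t, \mulch[m_1]{r})$, but I expect that a complete proof will need the former — and that is the step I cannot presently carry through, which is why the statement is phrased as a conjecture supported by the experimental data of Section~\ref{sec:expdata}.
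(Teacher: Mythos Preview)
The statement is a \emph{conjecture}: the paper gives no proof, and explicitly says so in the paragraph preceding it. Your proposal is therefore not being compared against a proof but against the paper's discussion of why no proof is known. On that level you are well aligned: you identify the same key relation $r + s = v t$ with $v = \tr[m_1]{m_0}{w_1}$, the same $w_1 \leftrightarrow w_1^{-1}$ symmetry (implemented via $u_1 \mapsto u_1^{-1}$, which indeed swaps $r$ and $s$), and you correctly locate the obstruction in the fact that the cubic characters of $r$, $s$, $t$ are coupled through an additive relation that Carlitz's results do not resolve. Your closing paragraph is honest and matches the paper's assessment.

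That said, two concrete errors in your sketch would derail even the heuristic argument. First, $\tr{m_0}{a u_1^{-2}} = \tr{m_1}{a(u_1^2+u_1^{-2})} = \tr{m_1}{a t^2} = \tr{m_1}{a t}$ (using Frobenius and that the trace lands in $\GF{}$), \emph{not} $\tr{m_1}{a\,D_3(t)}$; there is no Dickson polynomial here, and the subfield proof in the paper goes directly from $u_1^2+u_1^{-2}$ to $u_1+u_1^{-1}$ by a square root. The Dickson polynomial only enters later, in the evaluation of the twisted Kloosterman sum. Second, and more damaging, your claim that $v = 0$ is equivalent to $f_a(w_1^{-1}) = 0$ is false: $v = 0$ forces $w_1 = 1$ (since $\card{U_1}$ is odd), whereas $f_a(w_1^{-1}) = \tr{m_1}{a v}$ vanishes for roughly half of $U_1$. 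This matters because the conjectured formula has its $w_1$-dependence organized around the Boolean value $f_a(w_1^{-1})$ and an unknown Boolean $\mystery$, not around $v$ itself or its cubic character; your proposed case split on ``$v = 0$ versus the cubic character of $v$'' is not the right dichotomy and would not recover the shape of the conjecture even if the collapse step could be justified.
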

In particular, this conjecture implies Conjecture~\ref{cnj:kloofour}:
if $K_{m_1}(a) = 4$, then $f_{a,b}$ is bent.
(And Corollary~\ref{crl:walshsubfield} already does so
when $\omega \in \GF{m_1}^*$.)

\subsection{Experimental data}
\label{sec:expdata}

The computation of $\Snu[2]$ was implemented in C and assembly\footnote{%
The source code is available at
\url{https://github.com/jpflori/expsums}.},
using AVX extensions for the arithmetic of $\GF{m_0}^*$,
PARI/GP~\cite{PARI2} to compute the Kloosterman sums $K_{m_1}(a)$,
and Pthreads~\cite{6506091} for parallelization.

The computational cost of verifying Conjecture~\ref{cnj:walshconj}
can be somewhat leveraged using elementary properties of $\Snu[2]$:
\begin{itemize}
\item it only depends on the cyclotomic class of $a \in \GF{m_1}^*$,
\item it is the same for $w_1$ and $w_1^{-1}$,
\item the inner value can be computed at the same time for $u_1$ and $u_1^{-1}$.
\end{itemize}
Whatsoever, there are:
\begin{itemize}
\item
$3$ values of $\gamma \in \GF[4]{}^*$,
\item
$\tilde{O}(2^{m_1})$ values of $a \in \GF{m_1}^*$,
\item
$2^{m_1-1}$ values of $w_1 \in U_1 \backslash \set{1}$,
\item
$\tilde{O}(2^{m_1})$ operations in $\GF{m_0}$ for each triple $(\gamma, a, w_1)$.
\end{itemize}
Therefore, checking the conjectured formula for $\Snu[2]$ over $\GF{m_0}$
has time complexity $\tilde{O}(2^{3 m_1})$ which quickly becomes overcostly
(and is comparable to that of computing the Walsh spectrum
for every cyclotomic class of $a \in \GF{m_1}^*$ which has time complexity
$\tilde{O}(2^{3 m_1})$ as well but space complexity $\tilde{O}(2^{m_1})$).

Still, we checked Conjecture~\ref{cnj:walshconj}
\begin{itemize}
\item completely for $m_2 = 3, 5, 7, 9$,
\item for $i$ up to $3405$ where $a = z^i$
and $z$ is a primitive element of $\GF{m_1}$ for $m_2 = 11$.
\end{itemize}

Finally, assuming $K_{m_1}(a) \equiv 1 \pmod{3}$
and Conjecture~\ref{cnj:walshconj} is correct,
Parseval's equality yields the following relation:
\begin{align*}
\sum_{x \in \GF{m_0}^*} \addch{\mystery f_a(w_1^{-1})}
& = \frac{2^{m_1} - 1}{3} \left( K_{m_1}(a) - 1 \right) \\
& = \Wa{f_{a,b}}(0) - 1 \enspace .
\end{align*}
This is supported by experimental evidence that
there are exactly $2^{m_1 - 1} + (5/6) \left(K_{m_1}(a) - 4\right) + 3$
(respectively $2^{m_1 - 1} - (1/6)\left(K_{m_1}(a) - 4\right)$)
values of $w_1 \in U_1$
such that $\mystery f_a(w_1^{-1})$ is zero when $\gamma = 1$
(respectively $\gamma \neq 1$).

\section{Further research and open problems}

Hopefully, Conjecture~\ref{cnj:walshconj} can be proved using
similar techniques
as the ones used by Mesnager~\cite{DBLP:journals/dcc/Mesnager11,DBLP:journals/tit/Mesnager11}
and in this note.
Otherwise, more involved techniques could be tried, \eg considering a whole
family of sums as a whole and their geometric structure.
Another posibility would be to directly treat the general Gauss sums of
Equations~(\ref{eq:gengauss}) and~(\ref{eq:gauss}) without focussing
on the case $\nu = 2$.

\bibliographystyle{plain}
\bibliography{even}

\end{document}